\newcommand{\nc}{\newcommand}
\nc{\vg}{\mathfrak{v} } \nc{\wg}{\mathfrak{w} } \nc{\zg}{\mathfrak{z} }
\nc{\ngo}{\mathfrak{n} } \nc{\ngoq}{\mathfrak{n}^{\QQ} }
\nc{\ngoz}{\mathfrak{n}^{\ZZ} } \nc{\ggoq}{\mathfrak{g}^{\QQ} }
\nc{\kg}{\mathfrak{k} } \nc{\mg}{\mathfrak{m} } \nc{\bg}{\mathfrak{b} }
\nc{\ggo}{\mathfrak{g} } \nc{\ggob}{\overline{\mathfrak{g}} }
\nc{\sog}{\mathfrak{so} } \nc{\sug}{\mathfrak{su} } \nc{\spg}{\mathfrak{sp}}
\nc{\slg}{\mathfrak{sl} } \nc{\glg}{\mathfrak{gl} } \nc{\cg}{\mathfrak{c} }
\nc{\rg}{\mathfrak{r} } \nc{\hg}{\mathfrak{h} } \nc{\tg}{\mathfrak{t} }
\nc{\ug}{\mathfrak{u} } \nc{\dg}{\mathfrak{d} } \nc{\ag}{\mathfrak{a} }
\nc{\pg}{\mathfrak{p} } \nc{\sg}{\mathfrak{s} } \nc{\lgo}{\mathfrak{l} }
\nc{\fg}{\mathfrak{f} }
\nc{\pca}{\mathcal{P}} \nc{\nca}{\mathcal{N}} \nc{\lca}{\mathcal{L}}
\nc{\oca}{\mathcal{O}} \nc{\mca}{\mathcal{M}} \nc{\tca}{\mathcal{T}}
\nc{\aca}{\mathcal{A}}
\nc{\GRA}{\mathcal{G}}
\nc{\vp}{\varphi} \nc{\ddt}{\frac{{\rm d}}{{\rm d}t}} \nc{\im}{\mathtt{i}}
\nc{\ala}{Anosov Lie algebra} \nc{\alas}{Anosov Lie algebras}
\nc{\SO}{\mathrm{SO}} \nc{\Spe}{\mathrm{Sp}} \nc{\Sl}{\mathrm{SL}}
\nc{\SU}{\mathrm{SU}} \nc{\Or}{\mathrm{O}} \nc{\U}{\mathrm{U}} \nc{\Gl}{\mathrm{GL}}
\nc{\Se}{\mathrm{S}} \nc{\Cl}{\mathrm{Cl}} \nc{\Spein}{\mathrm{Spin}}
\nc{\Pin}{\mathrm{Pin}} \nc{\G}{\mathrm{GL}_n} \nc{\g}{\mathfrak{gl}_n}
\nc{\RR}{{\Bbb R}} \nc{\HH}{{\Bbb H}} \nc{\CC}{{\Bbb C}} \nc{\ZZ}{{\Bbb Z}}
\nc{\FF}{{\Bbb F}} \nc{\NN}{{\Bbb N}} \nc{\QQ}{{\Bbb Q}} \nc{\PP}{{\Bbb P}}
\nc{\vs}{\vspace{.5cm}}
 \nc{\ip}{\langle\cdot,\cdot\rangle}
\nc{\la}{\langle} \nc{\ra}{\rangle} \nc{\unm}{\frac{1}{2}} \nc{\unc}{\frac{1}{4}}
\nc{\und}{\frac{1}{16}} \nc{\f}{\frac}
\nc{\no}{\vs\noindent} \nc{\lam}{\Lambda^2\ggo^*\otimes\ggo} \nc{\tang}{{\rm T}}
\nc{\dif}{{\rm d}} \nc{\preq}{\simeq_K} \nc{\lb}{[\cdot,\cdot]}
\nc{\ngog}{\ngo_{\mathcal{G}}}
\nc{\He}{\operatorname{Hess}} \nc{\ad}{\operatorname{ad}}
\nc{\Ad}{\operatorname{Ad}} \nc{\rank}{\operatorname{rank}}
\nc{\Irr}{\operatorname{Irr}} \nc{\End}{\operatorname{End}}
\nc{\Aut}{\operatorname{Aut}} \nc{\Inn}{\operatorname{Inn}}
\nc{\Der}{\operatorname{Der}} \nc{\Ker}{\operatorname{Ker}}
\nc{\Iso}{\operatorname{I}} \nc{\Diff}{\operatorname{Diff}}
\nc{\Lie}{\operatorname{L}} \nc{\tr}{\operatorname{tr}}
\nc{\degr}{\operatorname{dgr}} \nc{\sen}{\operatorname{sen}}
\nc{\modu}{\operatorname{mod}} \nc{\Ric}{\operatorname{Ric}}
\nc{\sym}{\operatorname{sym}} \nc{\sca}{\operatorname{sc}} \nc{\scalar}{{\sf s}}
\nc{\grad}{\operatorname{grad}} \nc{\ricci}{\operatorname{ric}}
\nc{\Rin}{\operatorname{M}} \nc{\Le}{\operatorname{L}}
\nc{\level}{\operatorname{level}} \nc{\rad}{\operatorname{r}}
\nc{\abel}{\operatorname{ab}} \nc{\Pf}{\operatorname{Pf}}
\theoremstyle{plain}
\newtheorem{theorem}{Theorem}[section]
\newtheorem{proposition}[theorem]{Proposition}
\newtheorem{lemma}[theorem]{Lemma}
\theoremstyle{definition}
\theoremstyle{remark}
\newtheorem{remark}[theorem]{Remark}
\begin{document}

\title[On the existence of nilsolitons]{On the existence of nilsolitons on $2$-step nilpotent Lie groups}%
\author{David Oscari}%
\address{}%
\email{}%

\thanks{}%
\subjclass{}%
\keywords{}%

\begin{abstract}  A 2-step nilpotent Lie algebra $\ngo$ is said to be of {\it type} $(p, q)$ if $\dim \ngo=p+q$ and $\dim [\ngo,\ngo]=p$.  By considering a class of $2$-step nilpotent Lie algebras naturally attached to graphs, we prove that there exist indecomposable, 2-step nilpotent Lie groups of type $(p,q)$ which do not admit a nilsoliton metric for every pair $(p,q)$ such that $21\leq q$ and $q-1\leq p\leq \frac{1}{2}q^2-\frac{5}{2}q+9$.  This improves a result due to Jablonski \cite{Jbl}.
\end{abstract}
\maketitle

\section{Introduction}

A nilpotent Lie algebra $\ngo$ is said to be an {\it Einstein nilradical} if it admits an inner product $\ip$ such that
$\text{Ric}_{\ip}=cI+D$ for some $c\in\RR$ and $D\in\Der(\ngo)$, where $\text{Ric}_{\ip}$ is the Ricci operator of the left-invariant Riemannian metric defined by $\ip$ on the simply connected nilpotent Lie group $N$ with Lie algebra $\ngo$.  Such metrics are called {\it nilsolitons} in the literature and play the role of most distinguished or canonical metrics on nilmanifolds, as they are proved in \cite{Lrt} to satisfy
the following properties:

\begin{itemize}
\item They are {\it Ricci solitons}, i.e. the solutions of the
Ricci flow starting at them evolve only by scaling and
the action by diffeomorphisms (see \cite[Chapter 1]{libro}).

\item A given $N$ can admit at most one nilsoliton up to isometry and scaling among all its left-invariant metrics.

\item Einstein nilradicals are precisely the nilpotent parts of Einstein solvmanifolds.
\end{itemize}

Nevertheless, the existence, structural and classification problems on nilsolitons
seem to be far from being satisfactorily solved, if solved at all (see the survey \cite{cruzchica} for further information). It is proved, for instance, in \cite{Pyn} that in any dimension $\geq 8$ there is a one-parameter family of pairwise non-isomorphic $\mathbb{N}$-graded nilpotent Lie algebras which are not Einstein nilradicals.

In this paper, we are concerned with the following question: how are the Einstein and non-Einstein nilradicals distributed among 2-step nilpotent Lie algebras?
We are mainly interested in algebras which are indecomposable, in the sense that
they can not be written as a direct sum of ideals, as it is known that $\ngo=\ngo_1\oplus\ngo_2$ is an Einstein nilradical if and only if both $\ngo_1$ and $\ngo_2$ are so (see \cite{Jbl2}, \cite{Nkl2}).

A 2-step nilpotent Lie algebra $\ngo$ is said to be of {\it type} $(p, q)$ if $\dim \ngo=p+q$ and $\dim [\ngo,\ngo]=p$.  It follows that always
$p\leq D_q:=\frac{1}{2}q(q-1)$.  There is only one 2-step nilpotent Lie algebra of type $(D_q,q)$ and only finitely many of type
$(D_q-1,q)$ (up to isomorphism), which are all Einstein nilradicals (see \cite{Nkl2}).  On the other hand, any $2$-step nilpotent Lie algebra of dimension $\leq 7$ is an Einstein nilradical (see \cite{Wll} and \cite{Frn}).  Recently, by using concatenation and adjoint techniques of Lie algebras, Jablonski proved the following result.

\begin{theorem}\cite{Jbl}\label{Ja}
There exist indecomposable, 2-step nilpotent Lie algebras of type $(p,q)$, which are not Einstein nilradicals, for every pair $(p,q)$ such that
$$
8\leq q, \qquad\mbox{and}\qquad 2\leq p\leq \frac{5}{4}q-8.
$$
\end{theorem}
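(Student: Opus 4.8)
The plan is to realize the required algebras as the $2$-step nilpotent Lie algebras $\ngo_G=\vg\oplus\zg$ canonically attached to finite graphs $G=(V,E)$: set $\vg=\operatorname{span}\{X_v:v\in V\}$, $\zg=\operatorname{span}\{Z_e:e\in E\}$, declare $\zg$ central, and put $[X_u,X_v]=Z_e$ when $e=\{u,v\}\in E$ and $0$ otherwise. Such an algebra has type $(|E|,|V|)$, so $(p,q)=(|E|,|V|)$, and it is indecomposable exactly when $G$ is connected (an isolated vertex, or a whole connected component, splits off as a direct summand). The task thus reduces to producing, for each admissible pair, a connected graph with $|V|=q$, $|E|=p$ whose $\ngo_G$ is not an Einstein nilradical. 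The advantage of this class is that the nilsoliton question becomes purely combinatorial: the standard basis is a nice basis, so by Nikolayevsky's criterion \cite{Nkl2} the algebra $\ngo_G$ admits a nilsoliton if and only if the linear system $U_G\,\beta=\mathbf{1}$ has a solution $\beta$ with all coordinates positive, where $U_G$ is the $p\times p$ Gram matrix of the bracket relations. A direct computation gives $U_G=3I+A$, with $A$ the adjacency matrix of the line graph $L(G)$ (a $1$ for each pair of edges sharing a vertex), so everything is controlled by edge-adjacency.

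First I would isolate a small \emph{seed} graph $G_0$ whose algebra is indecomposable and not an Einstein nilradical, i.e.\ for which $(3I+A)^{-1}\mathbf{1}$ has a nonpositive entry; the fact quoted in the introduction that every $2$-step algebra of dimension $\le 7$ is Einstein forces $|V|+|E|\ge 8$, which pins down where such seeds can first appear. I would then use two type-raising graph operations — attaching a pendant edge, which sends $(p,q)\mapsto(p+1,q+1)$, and inserting an edge between existing vertices, which sends $(p,q)\mapsto(p+1,q)$ — and, starting from spanning trees and the seed, sweep out every connected type with $q-1\le p\le\frac{1}{2}q(q-1)$. Showing that the failure of positivity of $\beta$ is \emph{inherited} under these moves requires care, since enlarging $G$ enlarges and perturbs $U_G$; this is a monotonicity/perturbation analysis of the system $U_G\beta=\mathbf{1}$. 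The arithmetic check that the reachable region contains the part $q-1\le p\le\frac{5}{4}q-8$ of the target (nonempty only for $q\ge 28$) is then routine bookkeeping.

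The genuine obstacle is the low-edge regime. A connected graph on $q$ vertices has $p\ge q-1$ edges, whereas the theorem asks for $p$ as small as $2$ with $q$ arbitrarily large; for $p<q-1$ no connected graph, and hence no indecomposable $\ngo_G$, exists, so the pure graph model cannot reach those types. This is precisely where the concatenation and adjoint constructions of \cite{Jbl} are indispensable: one glues a fixed non-Einstein-nilradical building block to long, thin pieces (chains of added generators that raise $q$ while holding $p$ essentially fixed) so as to keep the whole algebra indecomposable yet still destroy every candidate nilsoliton. The hard part is proving that this gluing preserves the non-Einstein-nilradical property — being an Einstein nilradical is the \emph{absence} of a distinguished metric, a condition that is neither open nor obviously hereditary, so it cannot be checked locally. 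The control must come from tracking the pre-Einstein derivation of the glued algebra and the moment-map obstruction it induces, verifying that the adjoined directions never supply enough extra symmetry to solve $\Ric_{\ip}=cI+D$ with $D\in\Der(\ngo)$; making this quantitative uniformly across the full range $8\le q$, $2\le p\le\frac{5}{4}q-8$ is the crux of the argument.
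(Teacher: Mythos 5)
There is a genuine gap, and it sits exactly where you yourself locate ``the crux.'' First, some context: Theorem \ref{Ja} is not proved in this paper at all --- it is quoted from \cite{Jbl}; the paper's graph-theoretic machinery (Proposition \ref{criteriopositividad} and the coherent-component lemmas) is used to prove Theorem \ref{thm}, whose range $q-1\leq p\leq \frac{1}{2}q^2-\frac{5}{2}q+9$ is complementary to Jablonski's precisely because of the obstruction you correctly identify: a connected graph on $q$ vertices has at least $q-1$ edges, so no indecomposable $\ngo_{\GRA}$ of type $(p,q)$ with $p<q-1$ can arise from a graph. Since $\frac{5}{4}q-8<q-1$ for all $q\leq 27$, the \emph{entire} range of Theorem \ref{Ja} for $8\leq q\leq 27$, and all but the sliver $q-1\leq p\leq\frac{5}{4}q-8$ for $q\geq 28$, is invisible to the graph model. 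For this core regime your proposal offers only an appeal to the ``concatenation and adjoint constructions of \cite{Jbl},'' while explicitly deferring the decisive step --- that gluing a non-Einstein-nilradical block to chains of added generators preserves the non-Einstein-nilradical property --- as ``the crux of the argument.'' Naming the technique of the paper being cited is not a proof; as written, essentially the whole content of the theorem is assumed rather than established, so the proposal cannot be accepted as a proof of Theorem \ref{Ja} (nor does it reproduce any proof contained in the present paper, since there is none to reproduce).

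There is also a secondary gap in the part you do sketch concretely. Your plan for the graph-reachable strip relies on non-positivity being \emph{inherited} under the moves ``attach a pendant edge'' and ``insert an edge,'' which you acknowledge needs a monotonicity/perturbation analysis but do not supply. In fact no such general inheritance principle can hold: positivity of the solution of $\left(3I+\textrm{Adj}\, L(\GRA)\right)c=\nu\mathbf{1}$ is not monotone under edge insertion, since any non-positive graph can be completed by adding edges to the complete graph $K_q$, which is positive (all edges are similar, so by Proposition \ref{ladossimilares} all weights are equal and hence positive). This is exactly why the paper, in proving its own Theorem \ref{thm}, uses no inheritance argument: it builds an explicit chain of graphs $\GRA(l)$ interpolating between the extreme types and verifies non-positivity of \emph{each} graph in the chain separately, through the case-by-case analysis of coherent components in Lemmas \ref{4comp 1}, \ref{4comp 2} and \ref{grafoscandidato}. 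If you want to salvage your scheme even on the strip $q\geq 28$, $q-1\leq p\leq\frac{5}{4}q-8$, you would need to replace the claimed inheritance by explicit verifications of this kind for every graph your moves produce.
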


We study in this note a class of $2$-step nilpotent Lie algebras naturally attached to graphs in order to find non-Einstein nilradicals among the types which are not covered by the above theorem.  We use a criterium based on the `positivity' of a graph given in \cite{LrtWll}.  Our main result can be stated as follows.

\begin{theorem}\label{thm}
There exist indecomposable, 2-step nilpotent Lie algebras of type $(p,q)$, which are not Einstein nilradicals, for every pair $(p,q)$ such that
$$
21\leq q \qquad\mbox{and}\qquad q-1\leq p\leq \frac{1}{2}q^2-\frac{5}{2}q+9.
$$
\end{theorem}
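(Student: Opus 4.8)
The plan is to reduce the existence question for nilsolitons to a purely combinatorial condition on graphs, and then to exhibit, for every admissible pair $(p,q)$, a connected graph that fails this condition. Recall that to a finite graph $G$ with vertex set $V$ and edge set $E$ one attaches the $2$-step nilpotent Lie algebra $\ngo_G=\vg\oplus\zg$, where $\vg$ has basis $\{X_i : i\in V\}$, $\zg$ has basis $\{Z_e : e\in E\}$, and $[X_i,X_j]=Z_e$ whenever $e=\{i,j\}\in E$ (all other brackets vanishing). Then $\ngo_G$ is $2$-step nilpotent of type $(p,q)$ with $p=|E|$, $q=|V|$, and it is indecomposable if and only if $G$ is connected (an isolated vertex or a disconnecting set splits $\ngo_G$ into an orthogonal sum of ideals). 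Computing $\Ric$ in the adapted orthonormal basis, and noting that adapting the inner product amounts to assigning a free positive weight $x_e$ to each edge, one recovers the criterion of \cite{LrtWll}: $\ngo_G$ is an Einstein nilradical if and only if the unique solution $x=(x_e)$ of the linear system whose $e$-th equation reads $3x_e+\sum_{f} x_f = 1$ (the inner sum taken over the edges $f\neq e$ sharing a vertex with $e$) is strictly positive. The coefficient matrix is $3I+A$, with $A$ the adjacency matrix of the line graph of $G$; since line graphs have least eigenvalue $\geq -2$, this matrix is positive definite, so the solution is unique and \emph{positivity of $G$} means exactly the strict positivity of $x$.

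With this in hand the theorem reduces to finding, for every $(p,q)$ with $21\leq q$ and $q-1\leq p\leq \frac12 q^2-\frac52 q+9$, a connected graph on $q$ vertices and $p$ edges whose solution $x$ has a negative entry. The starting observation is that a single edge can be forced negative: if $e=\{u,v\}$ has endpoints each carrying $k$ pendant leaves (a ``double star''), then symmetry collapses the system to two equations and yields $x_e=(2-k)/(k+6)$, strictly negative as soon as $k\geq 3$. First I would upgrade this to a two-parameter family: keep two adjacent hubs $u,v$ with enough low-degree neighbours to guarantee $\sum_{f\sim e}x_f>1$, hence $x_e<0$, and then absorb the required number of edges into a clique (and the required number of vertices into pendant edges) attached \emph{away} from the hubs, so that the weights of the edges adjacent to $e$ are essentially undisturbed. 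Exploiting the symmetry of the construction, the defining system again collapses to one in a bounded number of unknowns, one per orbit of edges, which I would solve explicitly and estimate to confirm $x_e<0$ throughout the range.

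The main obstacle is the tension between the two demands on the family: realising a given $p$ close to $\binom q2$ forces one to pack in many edges, yet every edge incident to a neighbour of $e$ lowers the corresponding weight and pushes $x_e$ back towards $0$. Controlling this quantitatively—determining exactly how dense the graph may be made while $\sum_{f\sim e}x_f>1$ survives—is what produces the bound $p\leq \frac12 q^2-\frac52 q+9$, while $q\geq 21$ is precisely what guarantees that the negativity-forcing gadget, together with the slack the estimate needs, fits inside $q$ vertices. Concretely I expect the extremal member to be $K_q$ with a carefully chosen family of about $2q-9$ edges deleted so as to expose two sparse hubs; checking both connectivity and $x_e<0$ for this graph, and then interpolating down to the spanning-tree case $p=q-1$ by deleting further edges, completes the argument.
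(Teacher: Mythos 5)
Your setup is sound and matches the paper's: the same graph--Lie algebra correspondence, the same positivity criterion of \cite{LrtWll} via the line graph (your double-star computation $x_e=(2-k)/(k+6)$ is correct, and it is the same kind of gadget the paper exploits), and the same overall strategy of exhibiting, for each admissible $(p,q)$, a connected non-positive graph, starting from a dense extremal example and deleting edges down to a tree. However, the proposal has a genuine gap exactly where the theorem's content lies. First, the claim that a clique can be attached ``away from the hubs'' so that the weights adjacent to $e$ are ``essentially undisturbed'' is unsupported: the weights are the solution of a single global linear system $(3I+\mathrm{Adj}\,L(\GRA))x=\nu\mathbf{1}$, and attaching a dense component (which, to preserve connectivity, must be joined to the rest of the graph) changes every entry of the solution. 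Whether $x_e$ stays negative as the clique grows is precisely the quantitative question whose answer produces the bound $p\leq\frac12 q^2-\frac52 q+9$; note that for $p=D_q$ and $p=D_q-1$ the algebras \emph{are} Einstein nilradicals, so density genuinely destroys negativity and the estimate cannot be waved through. The paper settles this by collapsing the system via coherent components and solving it explicitly for parametrized families (its Lemmas 3.1--3.3 on graphs with four and five coherent components), obtaining closed-form expressions such as $g=\frac{\nu(v+4)}{\det(A)}[(10rv-9v)+(21r-36)]$ whose signs can be read off for all parameter values at once.

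Second, positivity is not monotone under edge deletion, so ``interpolating down'' from the extremal graph to a spanning tree is not a soft step: every one of the roughly $\frac12 q^2$ intermediate graphs must be shown non-positive, and since $q$ is unbounded this cannot be a finite check --- it requires uniform lemmas covering whole families, together with a deletion order designed so that \emph{each} intermediate graph falls into one of the analyzed families. This is the bulk of the paper's Section 4: an explicit ordered list $H$ of edges whose successive removal keeps every intermediate graph inside one of finitely many coherence-graph shapes (with $3$, $4$ or $5$ components), each covered by a lemma, while preserving connectivity (guaranteed because the final graph $\GRA(1,2)$ is connected and all earlier graphs contain it). Your proposal acknowledges these checks (``solve explicitly and estimate'', ``checking \dots completes the argument'') but does not carry out any of them, nor does it specify a deletion order for which they could be carried out uniformly. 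As it stands, the argument proves the theorem only for the finitely many types realized by your explicitly computed gadgets, not for the full range $q-1\leq p\leq\frac12 q^2-\frac52 q+9$.
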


As it can be visualized in Figure \ref{Figura}, this boundary considerably improves the one given in Theorem \ref{Ja} for large $q$, as it is quadratic instead of linear.  The existence of non-Einstein nilradicals of type
$(p,q)$ such that
$$
\frac{1}{2}q^2-\frac{5}{2}q+10\leq p\leq D_q-2.
$$
is an open question.

\begin{figure}
\begin{center}
\begin{tikzpicture}[rotate=0,scale=0.8,domain=2:700,xscale=0.01mm,yscale=.05mm]
\draw[-] (0,-1) -- (0,30) node[left] {$q$};
\draw[-] (-3,0) -- (400,0) node[below] {$p$};
\draw[color=red,domain=5.6:25] plot (0.5*\x*\x-0.5*\x,\x) node[right] {$p = D_q = \frac{1}{2}q^2-\frac{1}{2}q$};
\draw[color=blue,domain=6:30] plot (0.5*\x*\x-2.5*\x+9,\x) node[right] {$p = \frac{1}{2}q^2-\frac{5}{2}q+9$};
\draw[color=green,domain=8:30] plot (1.25*\x-8,\x) node[right] {$p = \frac{5}{4}q-8$};
\end{tikzpicture}
\end{center}  \caption{}\label{Figura}
\end{figure}

{\it Acknowledgements.} I am grateful to my advisor Jorge Lauret for his invaluable help during the preparation of the paper.


\section{ Preliminaries}

\subsection{ The Lie algebra associated with a graph}

Let $\mathcal{G}=(S,E)$ be a (finite, undirected) graph, with set of vertices $S=\{v_1,\dots,v_q\}$
and edges $E=\{l_1,\dots,l_p\}$, $l_k=v_iv_j$ for some $i,j$. We associate the Lie algebra
$\ngog=(\RR^n,\lb)$, $ n=p+q$ with each $\mathcal{G}$, with Lie bracket defined by
$$
[e_i,e_j]=\left\{\begin{array}{cl}
                   e_{q+k} & \textrm{if }l_k=v_iv_j, i<j\leq q;\\
                   0 & \textrm{otherwise}.
                 \end{array}
\right.
$$
where $\{e_i\}_{i=1}^n$ is the standard basis of $\RR^n$. We will often identify the vertices of the
graph with the vectors $e_1,\dots,e_q$,  and the edges with $e_{q+1},\dots,e_{q+p}$. Then the bracket between two vertices $v_i$ and $v_j$, $i<j$, is the edge joining them, and it vanishes otherwise. To obtain a well defined bracket we add the assumption that no two edges join the same pair of vertices.

Recall from graph theory that two edges $l_k$, $l_m$ of a graph $\GRA$ are called adjacent if
they share a vertex, which will be denoted by $l_k\sim l_m$. The line graph $L(\GRA)$ of $\GRA$ is
the graph whose vertices are the edges of $\GRA$  and where two of them are joined if and
only if they are adjacent. The adjacency matrix $\textrm{Adj}\,(\mathcal{G})$ of a graph $\GRA$ with a labelling
$\{v_1,\dots,v_q\}$ for its set of vertices is defined as the (symmetric) $q\times q$ matrix with 1
in the entry $i,j$ if $v_iv_j$ is an edge and zero otherwise.

\begin{proposition}\cite{LrtWll}\label{criteriopositividad} $\ngo_{\mathcal{G}}$ is an Einstein nilradical if and only if there exist $\nu>0$ and weights $c_1,\dots, c_p \in\RR$ for the edges such that
 \begin{align}
\label{equivalenciasistema}3c_k+\sum_{l_m\sim l_k}c_m=\nu, & \qquad \forall k=1,\dots,p,
\\\label{entradaspositivas}  c_k>0, & \qquad \forall k=1,\dots,p,
\end{align}
where the sum is over all edges $l_m$ that share a vertex with $l_k$.
\end{proposition}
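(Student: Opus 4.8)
The plan is to reduce the existence of a nilsoliton on $\ngo_{\GRA}$ to a direct computation of the Ricci operator for \emph{diagonal} inner products, and then to read off (\ref{equivalenciasistema})--(\ref{entradaspositivas}) as the exact condition for such a metric to satisfy $\Ric=cI+D$. Concretely, I fix the standard basis and consider the inner products $\ip$ making $\{e_i\}$ orthogonal with $\langle e_i,e_i\rangle=1$ for the vertices $e_1,\dots,e_q$ and $\langle e_{q+k},e_{q+k}\rangle=c_k>0$ for the edges $e_{q+1},\dots,e_{q+p}$. Passing to the associated orthonormal frame (with $\hat e_{q+k}=e_{q+k}/\sqrt{c_k}$) rescales the structure constants to $[\hat e_i,\hat e_j]=\sqrt{c_k}\,\hat e_{q+k}$ for $l_k=v_iv_j$, so that everything depends only on the positive numbers $c_k$, which may be prescribed arbitrarily. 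For the ``if'' direction I would then simply \emph{construct} the metric: given a positive solution of (\ref{equivalenciasistema})--(\ref{entradaspositivas}) I verify below that the corresponding diagonal metric is a nilsoliton, whence $\ngo_{\GRA}$ is an Einstein nilradical.

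The heart of the matter is the Ricci computation. Since $\ngo_{\GRA}$ is $2$-step with $\zg:=[\ngo,\ngo]=\mathrm{span}(e_{q+1},\dots,e_{q+p})$ orthogonal to the complement $\vg=\mathrm{span}(e_1,\dots,e_q)$, the Ricci operator is diagonal in the frame above. Using the standard $2$-step formulas---whose signs I would pin down on the Heisenberg example, where the $\vg$-directions carry \emph{negative} and the $\zg$-directions \emph{positive} Ricci---the eigenvalues are
\[
\Ric\,\hat e_i=-\unm\Big(\textstyle\sum_{l_a\ni v_i}c_a\Big)\hat e_i,\qquad
\Ric\,\hat e_{q+k}=\unm\,c_k\,\hat e_{q+k}.
\]
Let $D:=\Ric-cI$, a diagonal operator with entries $d_\bullet$. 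It is a derivation precisely when $d_{q+k}=d_i+d_j$ for every edge $l_k=v_iv_j$, that is,
\[
\unm c_k-c=\Big(-\unm\textstyle\sum_{l_a\ni v_i}c_a-c\Big)+\Big(-\unm\textstyle\sum_{l_b\ni v_j}c_b-c\Big).
\]
Now $l_k$ is incident to both of its endpoints, while every other edge meeting $l_k$ shares exactly one endpoint with it, so $\sum_{l_a\ni v_i}c_a+\sum_{l_b\ni v_j}c_b=2c_k+\sum_{l_m\sim l_k}c_m$, and the relation collapses to $3c_k+\sum_{l_m\sim l_k}c_m=-2c=:\nu$. The coefficient $3$ is exactly $2+1$: two from $c_k$ occurring in the degree sum at each of its two endpoints, one from the positive $\zg$-eigenvalue (equivalently, it is the squared norm of the weight $E_{ii}+E_{jj}-E_{q+k,q+k}$ attached to the bracket). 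As the $c_k$ are positive edge-lengths and the left-hand side is then positive, $\nu=-2c>0$ is automatic (so $c<0$), and the existence of a diagonal nilsoliton is \emph{equivalent} to (\ref{equivalenciasistema})--(\ref{entradaspositivas}). This settles the ``if'' direction, and also the ``only if'' direction for diagonal metrics.

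The remaining---and genuinely delicate---point is to show that \emph{if $\ngo_{\GRA}$ is an Einstein nilradical at all, then it admits a diagonal nilsoliton}, so that the converse computation applies to an a priori non-diagonal nilsoliton. This is where I expect the main obstacle to lie. The mechanism I would use is the uniqueness of the nilsoliton up to isometry and scaling recalled in the introduction: the diagonal torus $T\subset\Aut(\ngo_{\GRA})$ rescaling the $e_i$ lets one normalize a given nilsoliton into a $T$-invariant---hence diagonal---one without leaving its isometry-and-scaling class, after which the computation above produces the required positive solution. Making this reduction rigorous (controlling the compact symmetry group of the soliton and its interaction with the nice basis, in the spirit of the moment-map description of Einstein nilradicals) is the technical core; once it is in place, the equivalence with the positivity of the graph system $3c_k+\sum_{l_m\sim l_k}c_m=\nu$ follows from the elementary computation above.
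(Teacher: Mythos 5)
First, a point of reference: the paper itself contains no proof of Proposition \ref{criteriopositividad} --- it is quoted verbatim from \cite{LrtWll} --- so your attempt can only be measured against the argument in that reference, which rests on Nikolayevsky's nice-basis criterion \cite{Nkl1} and the moment-map/real GIT description of Einstein nilradicals. Within your proposal, the computational half is correct and complete: for the diagonal metrics you consider, the Ricci operator is indeed diagonal, with eigenvalues $-\unm\sum_{l_a\ni v_i}c_a$ on vertex directions and $\unm c_k$ on edge directions; a diagonal operator is a derivation of $\ngo_{\GRA}$ exactly when $d_{q+k}=d_i+d_j$ for every edge $l_k=v_iv_j$; and, using the standing assumption that no two edges join the same pair of vertices, the resulting relation collapses to $3c_k+\sum_{l_m\sim l_k}c_m=\nu$ with $\nu=-2c>0$. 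So you have honestly proved: the system (\ref{equivalenciasistema})--(\ref{entradaspositivas}) has a solution if and only if $\ngo_{\GRA}$ carries a \emph{diagonal} nilsoliton. In particular the ``if'' direction of the proposition is done.

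The genuine gap is the reduction of an arbitrary nilsoliton to a diagonal one, and the mechanism you propose for it fails. First, no inner product at all is invariant under the diagonal torus $T\subset\Aut(\ngo_{\GRA})$, since $T$ is noncompact: $t^*\ip=\ip$ forces $t_i^2=1$ on every basis vector. The most one can ask is invariance under the finite subgroup $F=T\cap\Or(n)$ of sign-change automorphisms, and it is true (and a useful observation) that $F$-invariance forces diagonality. Second, and more seriously, uniqueness of the nilsoliton up to isometry and scaling does not produce an $F$-invariant nilsoliton: for $f\in F$ it only yields an automorphism $\psi_f$ and a scalar $\lambda_f>0$ with $f^*g_0=\lambda_f\,\psi_f^*g_0$, i.e., $F$ acts on the set of nilsoliton inner products (a single orbit of $\RR^{>0}\times\Aut(\ngo_{\GRA})$), and what you need is a \emph{fixed point} of that action. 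This does not follow formally: averaging metrics over $F$ destroys the soliton equation, and the nilsoliton orbit is not known to be a totally geodesic or convex subset of $\Gl_n(\RR)/\Or(n)$ on which a Cartan fixed-point argument could run. This missing step is precisely the nontrivial content of the cited result: in \cite{LrtWll} and \cite{Nkl1} one characterizes Einstein nilradicals as distinguished $\Gl_n$-orbits of the bracket via the moment map, and then proves that for an algebra admitting a nice basis (which the standard basis of $\ngo_{\GRA}$ is) distinguishedness of the full orbit can be detected on the diagonal subgroup, yielding a diagonal soliton; see also \cite{Jbl2} for the general principle of detecting distinguished orbits along subvarieties. Without an ingredient of this kind, your argument establishes only the equivalence ``positive solution $\Leftrightarrow$ existence of a diagonal nilsoliton,'' which is strictly weaker than the proposition.
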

A graph satisfying properties (1) and (2) is called {\it positive}. If we consider the line graph
$L(\mathcal{G})$ of $\GRA$, the first condition above may be written in terms of its adjacency matrix
$\textrm{Adj}\, L(\mathcal{G})$ as
\begin{equation*}\label{positividad}
\left(3I +\textrm{Adj}\, L(\mathcal{G})\right)\left[
                                                \begin{array}{c}
                                                  c_1 \\
                                                  \vdots \\
                                                  c_p \\
                                                \end{array}
                                              \right]=\nu\left[
                                                           \begin{array}{c}
                                                             1 \\
                                                             \vdots \\
                                                             1 \\
                                                           \end{array}
                                                         \right].
                    \end{equation*}

It can be proved that the matrix $3I+\textrm{Adj}\, L(\mathcal{G})$ is positive definite, thus given $\nu>0$
the above system has always a unique solution. And since $\nu>0$, we have that $\GRA$ is positive if and only if
$$
\left(3I +\textrm{Adj}\, L(\mathcal{G})\right)^{-1}\left[
                                                           \begin{array}{c}
                                                             1 \\
                                                             \vdots \\
                                                             1 \\
                                                           \end{array}
                                                         \right]
$$
has all its entries positive.

\subsection{Coherent decomposition of a graph}
Let $\mathcal{G}=(S,E)$ be a graph, and let us define for each $\alpha\in S$,
$$
\Omega'(\alpha)=\{\omega\in S:\, \omega\alpha\in E\}\quad\textrm{and}\quad \Omega(\alpha)=\Omega'(\alpha)\cup \{\alpha\}.
$$
Now consider the equivalence relation $\sim$ in $S$ defined as follows:
$$
\alpha\sim\beta\quad\textrm{if and only if}\quad \Omega'(\alpha)\subseteq\Omega(\beta)\textrm{ and }
\Omega'(\beta)\subseteq\Omega(\alpha),
$$
or in other words, two vertices are related if and only if they have the same neighbors. Let $\Lambda=\Lambda(S,E)$ be the set of equivalence classes in $S$ with respect to $\sim$, for each $\lambda\in\Lambda$ we call $S_{\lambda}\subseteq S$ its equivalence class. The subsets $S_{\lambda}$, $\lambda\in\Lambda$, are the coherent
components of $(S,E)$; they form a partition of the set $S$.

This decomposition was considered in \cite{DnMnk}, where the following properties are
also mentioned:
\begin{itemize}
  \item  Given $\mathcal{G}=(S,E)$, with $S_{\lambda}$ its coherent components, it is easy to see
that if for a given $\lambda\in\Lambda$ there exist $\alpha,\beta\in S_{\lambda}$ such that $\alpha\beta\in E$, then $\xi\eta\in E$ for all $\xi,\eta\in S_{\lambda}$. This implies that a coherent component is on its own either a complete graph or a discrete one.
  \item  To generalize the previous item let us assume that, given $\lambda,\mu\in\Lambda$  there exist $\alpha\in S_{\lambda}$ and $\beta\in S_{\mu}$, such that $\alpha\beta\in E$.  Then it is easy to see that $\xi\eta\in E$ for all $\xi\in S_{\lambda}$, $\eta\in S_{\mu}$.  Therefore, given two coherent components $S_{\lambda}$ and $S_{\mu}$, either they are not adjacent at all, or every possible edge between them is present in $E$. In the latter case we say that $S_{\lambda}$ and $S_{\mu}$ are {\it adjacent}.
      Let us define a set of unordered pairs $\mathcal{E}$ in such a way that $\lambda\mu\in\mathcal{E}$ if
and only if the components $S_{\lambda}$ and $S_{\mu}$ are adjacent. We call $(\Lambda,\mathcal{E})$  the {\it coherence graph} associated with $(S,E)$.
\end{itemize}
These properties give us the following useful result on the weights of a general
graph. We call two edges {\it similar} if either they join the same pair of coherent components, or they are in the same coherent component.

 \begin{proposition}\cite[Proposition 2.10]{Lfn}\label{ladossimilares}  Let $\mathcal{G}=(S,E)$ be a positive graph, with weights $(c_i)_{i=1}^p$ for some $\nu>0$ fixed. If $l_i,l_j$ are two similar edges, then $c_i=c_j$.
 \end{proposition}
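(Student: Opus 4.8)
The plan is to exploit the \emph{uniqueness} of the weight vector together with the symmetries that the coherent decomposition forces on $\GRA$. By Proposition~\ref{criteriopositividad} and the discussion following it, for a fixed $\nu>0$ the weights $c=(c_1,\dots,c_p)^t$ are the unique solution of
\[
\left(3I+\operatorname{Adj}\, L(\GRA)\right)c=\nu\,\mathbf{1},
\]
because $3I+\operatorname{Adj}\, L(\GRA)$ is positive definite. The first step is to record the standard fact that any automorphism $\sigma$ of $\GRA$ induces an automorphism $\tilde\sigma$ of the line graph $L(\GRA)$ via $\tilde\sigma(\{u,v\})=\{\sigma(u),\sigma(v)\}$. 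Its permutation matrix $P$ then commutes with $\operatorname{Adj}\, L(\GRA)$ and fixes the all-ones vector, so applying $P$ to the system above and using uniqueness yields $Pc=c$: the weights are constant on the orbits of $\tilde\sigma$. Hence it will be enough to produce, for two similar edges $l_i,l_j$, an automorphism of $\GRA$ taking $l_i$ to $l_j$.

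Next I would manufacture such automorphisms from the coherent components. The key claim is that for any $\lambda\in\Lambda$ and any $\alpha,\beta\in S_\lambda$, the transposition $(\alpha\,\beta)$ (the identity elsewhere) is an automorphism of $\GRA$. To see this, take $\gamma\neq\alpha,\beta$ with $\gamma\alpha\in E$; then $\gamma\in\Omega'(\alpha)\subseteq\Omega(\beta)$, and since $\gamma\neq\beta$ this forces $\gamma\in\Omega'(\beta)$, i.e. $\gamma\beta\in E$, the reverse implication being symmetric. As the pair $\{\alpha,\beta\}$ is trivially preserved, $(\alpha\,\beta)$ respects adjacency. Because transpositions generate the symmetric group, the whole group $\operatorname{Sym}(S_\lambda)$ acts on $\GRA$ by automorphisms.

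Finally I would split into the two types of similarity. If $l_i$ and $l_j$ lie in one coherent component $S_\lambda$ (which is then complete), they are $2$-element subsets of $S_\lambda$, and $\operatorname{Sym}(S_\lambda)$ acts transitively on these, giving an automorphism carrying $l_i$ to $l_j$. If instead $l_i=\alpha\beta$ and $l_j=\gamma\delta$ join the same pair of distinct components $S_\lambda,S_\mu$ (say $\alpha,\gamma\in S_\lambda$ and $\beta,\delta\in S_\mu$), then since $S_\lambda\cap S_\mu=\emptyset$ I can pick elements of $\operatorname{Sym}(S_\lambda)$ and $\operatorname{Sym}(S_\mu)$ sending $\alpha\mapsto\gamma$ and $\beta\mapsto\delta$; their commuting product is an automorphism sending $l_i$ to $l_j$. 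In both cases the first paragraph gives $c_i=c_j$.

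The step I expect to require the most care is the automorphism claim of the second paragraph, since it is precisely there that the defining condition of the coherent components is invoked, and one must make sure the verification is uniform for complete and for discrete components (and for $\gamma$ equal to, or adjacent to, the swapped vertices). Once that is in place, the transitivity statements and the linear-algebra reduction are routine.
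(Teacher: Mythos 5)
Your proof is correct. Note, however, that the paper itself contains no proof of this proposition: it is imported verbatim from \cite[Proposition 2.10]{Lfn}, so there is no in-paper argument to compare yours against. What you have written is a valid, self-contained substitute, and each step checks out: the permutation matrix $P$ of the line-graph permutation induced by a graph automorphism satisfies $P\,\mathrm{Adj}\,L(\GRA)\,P^{-1}=\mathrm{Adj}\,L(\GRA)$ and $P\mathbf{1}=\mathbf{1}$, so invertibility of $3I+\mathrm{Adj}\,L(\GRA)$ forces $Pc=c$; the verification that a transposition of two vertices in the same coherent component is an automorphism is exactly the right use of the defining condition $\Omega'(\alpha)\subseteq\Omega(\beta)$, $\Omega'(\beta)\subseteq\Omega(\alpha)$ (the case distinction $\gamma=\beta$ versus $\gamma\in\Omega'(\beta)$ is handled correctly); and the transitivity statements in your last paragraph are immediate once $\operatorname{Sym}(S_\lambda)$ acts by automorphisms. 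Two small observations. First, your argument never actually uses positivity of the graph, only unique solvability of the linear system, so you prove the slightly stronger statement that the unique solution of $(3I+\mathrm{Adj}\,L(\GRA))c=\nu\mathbf{1}$ is constant on similarity classes whether or not its entries are positive; this is harmless and arguably an improvement. Second, for the case of two edges joining the same pair of components you could avoid composing two permutations by noting that similarity is an equivalence relation and chaining $\alpha\beta\mapsto\gamma\beta\mapsto\gamma\delta$ through single transpositions, but your commuting-product argument is equally fine since $S_\lambda\cap S_\mu=\emptyset$.
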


\begin{remark}\label{solocompnotriviales}
In an arbitrary graph, if $l_i,l_j$ are edges with weights $c_i,c_j$ respectively, then the corresponding equations to $l_i,l_j$ coincide in the system (\ref{equivalenciasistema}) by Proposition \ref{ladossimilares}. Therefore, for a given graph $(S,E)$, the system (\ref{equivalenciasistema}) can be rewritten, obtaining a system in $|\mathcal{E}|+|\{\lambda\in\Lambda_0:\, |S_{\lambda}|>1\}|$ variables.
\end{remark}

\subsection{Results on two and three coherent components}\label{resultados} In \cite[Table 1]{Lfn}, the classification of graphs with up to 3 coherent components according to positivity is given. We revisit here this classification for self-containness, and also because we need to add a few cases where one coherent component has only one vertex.

We represent a graph via its coherence graph. Each circle represents a coherent
component, being black if the correspondent component is a complete graph, and
white if it is discrete. The existence of an edge joining two circles represents the
fact that every edge joining vertices between those coherent components is present
in the original graph. Finally, the natural number near to each coherent component
is the number of vertices that it contains.

\subsubsection{}\label{o-*} We consider the graph:

\begin{center}
\begin{pspicture}(-1.3,-.2)(1.3,.4)
\psline(-.75,0)(.75,0)
\psdots[dotstyle=o,dotsize=5pt](-.75,0)
\psdots[dotstyle=*,dotsize=5pt](.75,0)

\rput[l](-1.1,-.2){$r$}
\rput[l](1,-.2){$s$}
\end{pspicture}
\end{center}
We denote $S_1,S_2$ the coherent components with $r,s$ vertices, respectively. By
Proposition \ref{ladossimilares}, there are only two possibly different edge weights in this case: $a$,
for the edges joining $S_1$ with $S_2$, and $b$, for the edges inside $S_2$. Now, according to Remark \ref{solocompnotriviales}, we must consider two cases: $r\geq 1,s>1$, and $r\geq 1,s=1$.

If $r\geq 1,s>1$, the graph is positive if and only if $s\geq r$, due to \cite[Table 1]{Lfn}.

Now if $r\geq 1$ and $s=1$, all equations in the system (\ref{equivalenciasistema}) agree with the single equation $(2+r)a=\nu$, and its solution is $a=\nu/(2+r)$. The graph is positive if and only if $a$ is positive, which is clearly true, since $\nu>0$.

It follows that the graph is positive if and only if $s\geq r$ or $s=1$.

\subsubsection{}\label{o-*-*} Now we consider
\begin{center}
\begin{pspicture}(-.5,-.3)(2.5,.5)
\psline(0,0)(2,0)
\psdots[dotstyle=o,dotsize=5pt](0,0)
\psdots[dotstyle=*,dotsize=5pt](1,0)
\psdots[dotstyle=*,dotsize=5pt](2,0)

\rput[d](0,-.4){$r$}
\rput[d](1,-.4){$s$}
\rput[d](2,-.4){$t$}

\end{pspicture}\textrm{$$$$}
\end{center}
Let us call $S_1,S_2,S_3$ the coherent components with $r,s,t$ vertices, respectively. By
Proposition \ref{ladossimilares}, there are only four possibly different edge weights in this case: $a,b$,
for the edges joining $S_1$ with $S_2$, and $S_2$ with $S_3$ respectively; and $c,d$ for the edges inside $S_2$ and $S_3$ respectively. Now, according to Remark \ref{solocompnotriviales}, we must consider two cases: $r\geq 1,s>1$, and $r\geq 1,s=1$. In total we must consider four cases:
 \begin{itemize}
   \item[(i)] $r\geq 1,s>1,t>1$;
   \item[(ii)] $r\geq 1,s>1,t=1$;
   \item[(iii)] $r\geq 1,s=1,t>1$;
   \item[(iv)] $r\geq 1,s=1,t=1$.
 \end{itemize}

In (i), if $r\geq 1,s>1,t>1$, the graph is positive if and only if $(s+t)(s-r)>(r-1)(t-1)$, due to \cite[Table 1]{Lfn}.

In cases (ii) and (iv), i.e. $t=1$ and $s\geq 1$, the graph is actually
\begin{center}
\begin{pspicture}(-1.3,-.4)(1.3,.4)
\psline(-.75,0)(.75,0)
\psdots[dotstyle=o,dotsize=5pt](-.75,0)
\psdots[dotstyle=*,dotsize=5pt](.75,0)

\rput[l](-1.7,-.2){$r+1$}
\rput[l](1,-.2){$s$}
\end{pspicture}
\end{center}
which is positive if and only if $s=1$, or $s\geq r+1$, i.e. $s-r>0$.

In (iii), rewriting the system (\ref{equivalenciasistema}), we obtain
$$
\left[
  \begin{array}{ccc}
    r+2 & t & 0 \\
    r & t+2 & t-1 \\
    0 & 2 & 2t-1 \\
  \end{array}
\right]\left[
         \begin{array}{c}
           a \\
           b \\
           d \\
         \end{array}
       \right]
       =\nu\left[
          \begin{array}{c}
            1 \\
            1 \\
            1 \\
          \end{array}
        \right]\qquad(\nu>0)
$$
Then
$$
\left[
  \begin{array}{c}
    a \\
    b \\
    d \\
  \end{array}
\right]=\frac{\nu}{2t(2t+r+1)}\left[
                                \begin{array}{c}
                                  2(r+t) \\
                                  (1+t)t \\
                                  2t+r-rt \\
                                \end{array}
                              \right].
$$
The graph is positive if and only if $a,b,d>0$, that is, if and only if $2t+r-rt>0$, i.e. $r<\frac{2t}{t-1}=2+\frac{2}{t-1}$. The right-hand side  is decreasing. If $t=2$, $r=1,2,3$; and if $t\geq 3$, $r=1,2$. Then, $2t+r-rt>0$ if and only if $(r,t)=(3,2),(1,t),(2,t)$ with $t\geq 2$.

Therefore, from cases (i)-(iv) follows that the graph is positive if and only if some of the following holds:
\begin{itemize}
  \item $(s+t)$$(s-r)>(r-1)(t-1)$;
  \item $s=t=1$;
  \item $s=1$ and $(r,t)=(3,2),(1,t),(2,t)$ with $t\geq 2$.
\end{itemize}

\section{Sufficient conditions for non-positivity}
In this section, we give sufficient conditions for certain graphs with 4 or 5 coherent components to be non-positive by using the same idea as in \cite{Lfn}: consider its coherent decomposition and apply Proposition \ref{criteriopositividad} to rewrite the system (\ref{equivalenciasistema}), obtaining in our case a smaller system with size at most $5\times5$ (see Remark \ref{solocompnotriviales}).

\subsection{Four coherent components}
\begin{center}
\begin{figure}
\begin{pspicture}(0,-2)(12,1)
\psline(0,0)(3,0)
\psdots[dotstyle=*,dotsize=5pt](0,0)
\psdots[dotstyle=o,dotsize=5pt](1,0)
\psdots[dotstyle=*,dotsize=5pt](2,0)
\psdots[dotstyle=o,dotsize=5pt](3,0)

\rput[d](0,-.4){$1$}
\rput[d](1,-.4){$s$}
\rput[d](2,-.4){$1$}
\rput[d](3,-.4){$u$}
\psline(5,0)(6,0)(7,1)(7,-1)(6,0)
\psdots[dotstyle=o,dotsize=5pt](5,0)
\psdots[dotstyle=*,dotsize=5pt](6,0)
\psdots[dotstyle=o,dotsize=5pt](7,1)
\psdots[dotstyle=*,dotsize=5pt](7,-1)
\rput[d](5,-.4){$r$}
\rput[d](6,-.4){$s$}
\rput[c](7,1.4){$t$}
\rput[d](7,-1.4){$u$}
\psline(9,0)(10,0)(11,1)(12,0)(11,-1)(10,0)(12,0)
\psdots[dotstyle=o,dotsize=5pt](9,0)
\psdots[dotstyle=*,dotsize=5pt](10,0)
\psdots[dotstyle=*,dotsize=5pt](11,1)
\psdots[dotstyle=*,dotsize=5pt](12,0)
\psdots[dotstyle=*,dotsize=5pt](11,-1)
\rput[d](9,-.4){$r$}
\rput[d](10,-.4){$2$}
\rput[d](11,1.4){$1$}
\rput[d](12.4,0){$u$}
\rput[d](11,-1.4){$v$}
\end{pspicture}
  \caption{}\label{Fig.}
\end{figure}
\end{center}

\begin{lemma}\label{4comp 1} The graph on the left in Figure \ref{Fig.} is non-positive for all $u\geq 6$, $s=1,2$.
\end{lemma}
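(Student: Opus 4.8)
The plan is to use the coherent decomposition to collapse the defining system (\ref{equivalenciasistema}) to a small linear system, as in the worked cases of Section~\ref{resultados}, and then to exhibit a weight that is forced to be nonpositive. First I would record the coherent structure of the left-hand graph of Figure~\ref{Fig.}: label the four components $S_1,S_2,S_3,S_4$, with $|S_1|=1$, $|S_2|=s$, $|S_3|=1$, $|S_4|=u$, so that the coherence graph is the path $S_1$--$S_2$--$S_3$--$S_4$. Since $S_1$ and $S_3$ are singletons and $S_2,S_4$ are discrete, there are no edges internal to a component; hence, by Proposition~\ref{ladossimilares}, there are exactly three weight classes: $a$ on the $s$ edges between $S_1$ and $S_2$, $b$ on the $s$ edges between $S_2$ and $S_3$, and $c$ on the $u$ edges between $S_3$ and $S_4$. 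By Remark~\ref{solocompnotriviales} the system (\ref{equivalenciasistema}) then reduces to three equations in $a,b,c$.

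Next I would write those equations by counting, for a representative edge of each class, the edges of each class that share one of its endpoints. At the single vertex of $S_1$ a representative $a$-edge meets the $s-1$ other $a$-edges, and at its $S_2$-endpoint it meets one $b$-edge; a $b$-edge meets one $a$-edge at its $S_2$-endpoint and, at the single vertex of $S_3$, both the remaining $s-1$ $b$-edges and all $u$ $c$-edges; a $c$-edge meets, again at the vertex of $S_3$, all $s$ $b$-edges and the remaining $u-1$ $c$-edges. This gives the reduced system
\[
\begin{cases}
(s+2)a+b=\nu,\\
a+(s+2)b+uc=\nu,\\
sb+(u+2)c=\nu,
\end{cases}
\]
which has a unique solution for each $\nu>0$ because, as recalled after Proposition~\ref{criteriopositividad}, $3I+\textrm{Adj}\,L(\GRA)$ is positive definite; the graph is positive exactly when this solution satisfies $a,b,c>0$.

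Finally I would solve for the middle weight $b$ by Cramer's rule. The determinant of the coefficient matrix is $2s^2+2su+8s+3u+6$, which is positive, while replacing the second column by the right-hand side yields the numerator $\nu(2s+2-u)$, so that $b=\nu(2s+2-u)/(2s^2+2su+8s+3u+6)$; the analogous computations give $a=\nu(2s+2u+2)/(\cdots)>0$ and $c=3\nu(s+1)/(\cdots)>0$ for all $s,u\geq 1$, so the single obstruction to positivity is the sign of $b$. For $s=1$ this forces $b<0$ whenever $u\geq 4$, and for $s=2$ it forces $b\leq 0$ whenever $u\geq 6$; thus for every $u\geq 6$ and $s\in\{1,2\}$ the requirement (\ref{entradaspositivas}) fails, so the graph is non-positive, and by Proposition~\ref{criteriopositividad} the associated algebra $\ngog$ is not an Einstein nilradical.

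The only delicate point is the incidence bookkeeping of the second step, where everything hinges on correctly separating the edges meeting at the degree-heavy singleton vertex of $S_3$ (incident to all $b$- and $c$-edges) from those meeting at an $S_2$-endpoint; a miscount there alters the off-diagonal coefficients and hence the sign of the numerator $2s+2-u$. Once the system is set up correctly, the conclusion is routine linear algebra.
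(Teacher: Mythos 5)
Your proof is correct and takes essentially the same route as the paper: reduce via the coherent decomposition and Proposition~\ref{ladossimilares} to the same $3\times 3$ system, solve it, and observe that the middle weight $b=\nu(2s+2-u)/\det(A)$ cannot be positive when $u\geq 6$ and $s\in\{1,2\}$. In fact your determinant $2s^2+2su+8s+3u+6$ is the correct one (the paper's printed denominator $6+3u+8s+2su+2u^2$ has a typo, $2u^2$ in place of $2s^2$), but since both expressions are positive this does not affect the sign argument; the only slip on your side is that for $s=1$, $u=4$ one gets $b=0$ rather than $b<0$, which is immaterial in the range $u\geq 6$.
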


\begin{proof} By Proposition \ref{ladossimilares}, there are only two possibly different edge weights: $a$
for the edges joining the first coherent component with the second coherent component (from left to right);
$b$ for the edges joining the second component with the third component; and $c$ for the edges joining the third component with the fourth component. Then the system (\ref{equivalenciasistema}) can be rewritten as
$$
\left[
  \begin{array}{ccc}
    2+s & 1 & 0 \\
    1 & 2+s & u \\
    0 & s & 2+u \\
  \end{array}
\right]
\left[
  \begin{array}{c}
    a \\
    b \\
    c \\
  \end{array}
\right]=\nu\left[
             \begin{array}{c}
               1 \\
               1 \\
               1 \\
             \end{array}
           \right].
$$
Then,
$$
\left[
  \begin{array}{c}
    a \\
    b \\
    c \\
  \end{array}
\right]
=\frac{\nu}{6+3u+8s+2su+2u^2}\left[
                   \begin{array}{c}
                     2(1+s+u) \\
                     2+2s-u \\
                     3(1+s) \\
                   \end{array}
                 \right].
$$
Suppose instead that the graph is positive. In particular, we must have $b>0$, which is a contradiction because $u\geq 6$. Then the graph is not positive.
\end{proof}

\begin{lemma}\label{4comp 2} If one of the following holds:
\begin{enumerate}
  \item[(i)] $r\geq 2,s=t=2$;
  \item[(ii)] $t\geq 2,s=u=1$,
\end{enumerate}
then the graph in the middle in Figure \ref{Fig.} is not positive.
\end{lemma}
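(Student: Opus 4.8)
The plan is to follow exactly the same strategy as in the proof of Lemma \ref{4comp 1}, namely: read off from the coherence graph in the middle of Figure \ref{Fig.} which edges are similar, use Proposition \ref{ladossimilares} to reduce the number of distinct weight variables, rewrite the system (\ref{equivalenciasistema}) as a small linear system, solve it explicitly, and then exhibit an entry of the solution vector that is negative (or zero) under each of the two hypotheses, contradicting positivity (\ref{entradaspositivas}).

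First I would set up the variables. The middle coherence graph has four components with $r,s,t,u$ vertices; the component with $s$ vertices is complete (black), and it is adjacent to the discrete component with $r$ vertices, to the discrete component with $t$ vertices, and to the complete component with $u$ vertices, with the latter two also mutually adjacent (the figure shows the central black vertex joined to the white $r$-vertex on the left and to both the white $t$-vertex and black $u$-vertex forming a triangle on the right). By Proposition \ref{ladossimilares}, edges joining the same pair of components all share one weight, and edges inside a complete component share one weight; this gives a handful of variables, say one for each adjacent pair in $\mathcal{E}$ plus one for each nontrivial complete component. I would then write down the $\le 5\times 5$ matrix $3I+\mathrm{Adj}\,L(\GRA)$ collapsed to these variables exactly as in the examples of Section \ref{resultados}, where the off-diagonal entries count how many similar neighbors each edge type has.

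Next I would specialize to the two cases. In case (i), $r\ge 2$, $s=t=2$, I substitute these values into the reduced system, solve for the weights as a single rational expression $\frac{\nu}{\Delta}(\cdots)$ with $\Delta>0$ the (positive) determinant, and identify the offending coordinate whose numerator is negative; the hypotheses $r\ge 2$ (with $t$ playing no further role once $t=2$, and $u$ presumably ranging freely) are chosen precisely to force that numerator $<0$. In case (ii), $t\ge 2$, $s=u=1$, the complete components degenerate to single vertices, so several edge-weight variables disappear and the system shrinks further; I again solve and locate the negative entry, the inequality $t\ge 2$ being exactly what makes the relevant numerator (a linear or quadratic expression in $t$) negative. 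In each case the conclusion is immediate: if the graph were positive all weights would be positive, contradicting the sign of that coordinate, so the graph is non-positive.

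The main obstacle I anticipate is purely bookkeeping rather than conceptual: correctly reading the adjacencies off the drawn coherence graph (in particular whether the $t$- and $u$-components are joined, which determines the cross terms in the matrix) and then carrying out the symbolic inversion of the reduced matrix without arithmetic slips, so that the determinant is manifestly positive and the numerator of the chosen coordinate has an unambiguous sign under the stated hypotheses. The case split itself may require care: the two hypotheses $(r\ge 2,s=t=2)$ and $(t\ge 2,s=u=1)$ correspond to different degeneracy patterns and hence different reduced systems, so I would treat them as two separate computations sharing the same template, and in each present only the solved form $\frac{\nu}{\Delta}(\cdots)$ together with the single inequality that yields the contradiction.
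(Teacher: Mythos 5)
Your strategy is exactly the one the paper uses: collapse the edge weights via Proposition \ref{ladossimilares} into a small set of variables (four pair-weights for $S_1S_2$, $S_2S_3$, $S_3S_4$, $S_2S_4$, plus one interior weight for each nontrivial complete component $S_2$, $S_4$), rewrite the system (\ref{equivalenciasistema}) as a reduced linear system, solve it, and exhibit a coordinate that the hypotheses force to be non-positive, contradicting (\ref{entradaspositivas}). You also read the coherence graph correctly: $S_2,S_3,S_4$ form a triangle with $S_1$ pendant on $S_2$. So there is no wrong idea and no difference in method.

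The issues are in execution, and since this lemma \emph{is} its computation, they matter. First, case (i) cannot be run as one computation with ``$u$ ranging freely'': when $u=1$ the interior weight $f$ of $S_4$ does not exist and the corresponding equation disappears, so the reduced system is structurally different ($5\times 5$ instead of $6\times 6$); the paper accordingly splits case (i) into the subcases $u>1$ and $u=1$, and both are genuinely needed in the proof of Theorem \ref{thm} (for instance $\GRA(q-2,q-1)$ falls in the subcase $u=1$). Second, your expectation that the determinant of the reduced matrix is ``manifestly positive'' is false: the collapsed matrix is \emph{not} $3I+\textrm{Adj}\,L(\GRA)$ (the paper warns precisely this) and is not even symmetric, so positive definiteness of the original system does not transfer. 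In the paper's case (i) with $u>1$ one finds $\det(A)<0$, and the contradiction comes from the interior weight $e$ of $S_2$, whose numerator $(2r-3)u^2+(7r-12)u+9(r-1)$ is \emph{positive} for $r\geq 2$; in case (ii) instead $\det(A)>0$, and the offending coordinate is the $S_2S_4$ weight $d$ with \emph{negative} numerator $t(2-t)+(3-rt)$ for $t\geq 2$. So the sign bookkeeping must be settled separately in each of the three reduced systems; once those computations are actually carried out and the signs corrected, your template reproduces the paper's proof.
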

\begin{proof} Let us call $S_1,S_2,S_3,S_4$ the coherent components with $r,s,t,u$ vertices. By Proposition \ref{ladossimilares}, there are only six possibly different edge weights in this case: $a,b,c,d$, for the edges joining $S_1$ with $S_2$, $S_2$ with $S_3$, $S_3$ with $S_4$, and $S_2$ with $S_4$ respectively; and $e,f$ for the edges inside $S_2$ and $S_4$ respectively.

In (i) we must consider the subcases $u>1$ and $u=1$.

Let $r\geq 2,s=t=2$ with $u>1$. Rewriting the system (\ref{equivalenciasistema}) we obtain
$$
\left[
  \begin{smallmatrix}
    2r & 4 & 0 & 2u & 3 & 0 \\
    0 & 0 & 4 & 4 & 0 & 2u-1 \\
    3+r & 2 & 0 & u & 1 & 0 \\
    r & 5 & u & u & 1 & 0 \\
    0 & 2 & u+3 & 2 & 0 & u-1 \\
    r & 2 & 2 & u+3 & 1 & u-1 \\
  \end{smallmatrix}
\right]
\left[
         \begin{smallmatrix}
           a \\
           b \\
           c \\
           d \\
           e \\
           f\\
         \end{smallmatrix}
       \right]
       =\nu\left[
             \begin{smallmatrix}
               1 \\
               1 \\
               1 \\
               1 \\
               1 \\
               1 \\
             \end{smallmatrix}
           \right].
$$
If we call $A$ the matrix of the system, its solution is given by
$$
\left[
  \begin{smallmatrix}
    a \\
    b \\
    c \\
    d \\
    e \\
    f \\
  \end{smallmatrix}
\right]
=\frac{\nu}{\det(A)}\left[
                       \begin{smallmatrix}
                          18+u^3+21u+8u^2\\
                          -18-15u+2ru-3u^2+ru^2\\
                          3(6+2r+5u+ru+u^2)\\
                          ru^2+2ru+3r-3u^2-12u-9\\
                         (2r-3)u^2+(7r-12)u+9(r-1) \\
                           3(3+r+4u+ru+u^2)\\
                       \end{smallmatrix}
                     \right],
$$
where $\det(A)<0$ for all $r,u$ (note that $A$ is not the matrix of the system (\ref{equivalenciasistema})). If we assume that the graph is positive, we must have $e>0$, which is a contradiction because if $r\geq 2$, then $\frac{\nu}{\det(A)}[(2r-3)u^2+(7r-12)u+9(r-1)]<0$.

If $r\geq 2,s=t=2$ with $u=1$, then
$$
\left[
  \begin{smallmatrix}
    0 & 2 & 4 & 2 & 0 \\
    2r & 4 & 0 & 2 & 3 \\
    3+r & 2 & 0 & 1 & 1 \\
    r & 5 & 1 & 1 & 1 \\
    r & 2 & 2 & 4 & 1 \\
  \end{smallmatrix}
\right]\left[
         \begin{smallmatrix}
           a \\
           b \\
           c \\
           d \\
           e \\
         \end{smallmatrix}
       \right]
=
\nu
\left[
  \begin{smallmatrix}
    1 \\
    1 \\
    1 \\
    1 \\
    1 \\
  \end{smallmatrix}
\right],
$$
where
$$
\left[
  \begin{smallmatrix}
    a \\
    b \\
    c \\
    d \\
    e\\
  \end{smallmatrix}
\right]
=\frac{\nu}{264+18r}\left[
                      \begin{smallmatrix}
                        48\\
                          -3(r-12)\\
                        9(r+4) \\
                        -6(r-4) \\
                         -6(3r-4)\\
                      \end{smallmatrix}
                    \right].
$$
As before, if the graph is positive,  $e>0$ (contradiction since $r\geq 2$).

It remains to consider the second case $t\geq 2,s=u=1$. Here
$$
\left[
  \begin{smallmatrix}
    r+2 & t & 0 & 1 \\
    r & t & t & 3 \\
    0 & 1 & t+2 & 1 \\
    r & t+2 & 1 & 1 \\
  \end{smallmatrix}
\right]
\left[
  \begin{smallmatrix}
    a \\
    b \\
    c \\
    d \\
  \end{smallmatrix}
\right]
=\nu\left[
      \begin{smallmatrix}
        1 \\
        1 \\
        1 \\
        1 \\
      \end{smallmatrix}
    \right],
$$
$$
\left[
  \begin{smallmatrix}
    a \\
    b \\
    c \\
    d \\
  \end{smallmatrix}
\right]
=\frac{\nu}{\det(A)}
\left[
  \begin{smallmatrix}
    3(1+t) \\
    2t-r+2 \\
    2(r+t+1) \\
    t(2-t)+(3-rt) \\
  \end{smallmatrix}
\right],
$$
and $\det(A)>0$ for all $r,t$. If we suppose that the graph is positive, then $d>0$, which is a contradiction as in fact, $d=\frac{\nu}{\det(A)}[t(2-t)+(3-rt)]<0$ ($t\geq 2$).
\end{proof}

\subsection{Five coherent components}\label{losejemplosde5comp}

\begin{lemma}\label{grafoscandidato} Under any of the following conditions,
\begin{enumerate}
  \item[(i)] $r,u\geq 2$
  \item[(ii)] $u=2,v\geq 15$
    \item[(iii)] $u=1$ and $r\geq 2$ \'o $v\geq 4$,
\end{enumerate}
the graph on the right in Figure \ref{Fig.} is not positive.
\end{lemma}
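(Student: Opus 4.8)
The plan is to proceed exactly as in Lemmas \ref{4comp 1} and \ref{4comp 2}: read off the coherent decomposition of the rightmost graph, use Proposition \ref{ladossimilares} to collapse the positivity system (\ref{equivalenciasistema}) to its distinct edge-weight classes, and then show that in each of the regimes (i)--(iii) the unique solution of the reduced system has a negative entry, so that no positive weights can exist and the graph fails to be positive by Proposition \ref{criteriopositividad}. Label the five coherent components $S_1,\dots,S_5$ by their positions in Figure \ref{Fig.}: $S_1$ discrete with $r$ vertices, $S_2$ complete with $2$, $S_3$ complete with $1$, $S_4$ complete with $u$, and $S_5$ complete with $v$; the coherence edges are $S_1S_2,\ S_2S_3,\ S_2S_4,\ S_2S_5,\ S_3S_4,\ S_4S_5$, so $S_2$ is the hub adjacent to every other component. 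By Proposition \ref{ladossimilares} the number of distinct weights is $6+\#\{\text{complete components with}>1\text{ vertex}\}$: there are six inter-component weights, one interior weight $e$ for the $2$-vertex hub $S_2$, and interior weights $f,j$ for $S_4,S_5$, present precisely when $u\ge 2$, resp. $v\ge 2$. Thus the reduced system is $9\times 9$ when $u,v\ge 2$ and drops to $8\times 8$ or $7\times 7$ in the degenerate cases $u=1$ or $v=1$, which must be handled separately, exactly as in \ref{o-*-*}.

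Having fixed the weight classes, I would count, for one representative edge of each class, the edges adjacent to it in each other class; this produces the matrix $A$ of the reduced system $Ac=\nu(1,\dots,1)^{t}$, whose rows have the shape $3c_{k}+\sum_{l_m\sim l_k}c_m=\nu$. Writing the solution via Cramer's rule as $c=\tfrac{\nu}{\det A}(\dots)$, I then need the sign of $\det A$ together with the relevant numerator. In regime (i), $r,u\ge 2$ (with sub-cases $v\ge 2$ and $v=1$), I expect the interior weight $e$ of the hub $S_2$ to be the culprit: its edge is adjacent to all $2r+2+2u+2v$ edges leaving $S_2$, so its equation forces $3e=\nu-2ra-2b-2uc-2vg$, and I would show the numerator of $e$ is negative once $r,u\ge 2$, precisely as the weight $e$ failed in Lemma \ref{4comp 2}(i). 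In regime (iii), $u=1$, the analysis splits again: for $r\ge 2$ the same interior weight $e<0$, while for $v\ge 4$ I expect the interior weight $j$ of $S_5$ to go negative, its equation reading $4g+2k+(2v-1)j=\nu$. Regime (ii), $u=2,\ v\ge 15$, is the borderline $r=1$ extension of (i), and should again be settled by exhibiting a single negative interior weight, the threshold $v\ge 15$ being the first $v$ at which that numerator changes sign.

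The main obstacle is purely computational. The generic system is $9\times 9$ with entries linear in $r,u,v$, so both $\det A$ and the chosen numerator are multivariate polynomials whose signs must be pinned down over the stated ranges, and the degenerate rows for $u=1$ and $v=1$ force several distinct matrices. To keep this under control I would, for each regime, (a) establish the sign of $\det A$ directly: the full matrix $3I+\mathrm{Adj}\,L(\GRA)$ is positive definite, so the system has a unique solution, but the reduced matrix $A$ is a non-symmetric contraction of it whose determinant sign must be computed by hand, as that sign already flips between the sub-cases of Lemma \ref{4comp 2}; and (b) isolate one coordinate and check that its numerator has the opposite sign throughout the range.

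A cleaner alternative, which avoids inverting the full matrix, is to produce a Farkas-type certificate of non-positivity: a vector $\lambda$ of equation-multipliers with $\lambda^{t}A\le 0$ entrywise and $\sum_i\lambda_i>0$. Pairing $\lambda^{t}Ac=\nu\sum_i\lambda_i$ with any hypothetical positive solution $c>0$ then gives $0\ge \lambda^{t}Ac=\nu\sum_i\lambda_i>0$, a contradiction, so no positive weights exist. Finding a short $\lambda$ supported on the few equations around the hub $S_2$ (and around $S_5$ in the $v\ge 4$ branch) would replace the determinant computations by a handful of explicit inequalities, and I expect this to be the most efficient route through the whole case analysis.
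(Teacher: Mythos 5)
Your setup is exactly the paper's: the same reading of the five coherent components ($S_1$ discrete with $r$ vertices, the two-vertex complete hub $S_2$ adjacent to all other components, and $S_3,S_4,S_5$ complete with $1,u,v$ vertices), the same reduction via Proposition \ref{ladossimilares} to a $9\times 9$ system when $u,v\geq 2$ and an $8\times 8$ system when $u=1$, and the same plan: solve the reduced system by Cramer's rule and exhibit an entry of its unique solution that is $\leq 0$. You even single out the same culprit as the paper in regimes (i) and (ii), namely the interior weight of the hub (the paper's $g$). The problem is that your proposal stops where the proof begins. The entire content of this lemma \emph{is} the computation you defer: the explicit solution
$$
g=\frac{\nu(u+v+2)}{\det(A)}\bigl[u^2(2r-3)+uv(2r-3)+u(5r-9)+3v(2r-1)+(3r-6)\bigr],
$$
the verification that $\det(A)<0$ on the stated ranges, and the elementary estimates of the bracket in each regime (at $u=2$ it becomes $(10rv-9v)+(21r-36)\geq v-15$, which is exactly where the threshold $v\geq 15$ comes from; at $u=1$ one gets $g=\frac{2\nu(3+v)}{\det(A)}[(5r-9)+v(4r-3)]$). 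Neither these formulas nor the alternative Farkas multipliers $\lambda$ are ever produced, so as written nothing is proved.

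Moreover, in the one place where you commit to something beyond the paper's choices, the commitment is false, which shows this is not a harmless omission. In regime (iii) with $u=1$, $r=1$, $v\geq 4$ you propose to show that the interior weight of $S_5$ goes negative. It does not: for $(r,u,v)=(1,1,4)$ and $\nu=1$ the unique solution of the $8\times 8$ reduced system is, in the paper's notation, $a=\tfrac16$, $b=\tfrac{5}{42}$, $c=\tfrac17$, $d=\tfrac1{14}$, $e=\tfrac1{21}$, $f=\tfrac1{42}$, $g=0$, $i=\tfrac{2}{21}$, so the $S_5$-interior weight $i$ is strictly positive and the unique failure of positivity is at the hub weight $g$; and as $v\to\infty$ (with $r=u=1$) one has $i\sim \nu/(2v)>0$, so this is not a boundary accident. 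The point of the paper's proof is that the hub entry works uniformly in case (iii): its numerator $(5r-9)+v(4r-3)$ is $\geq 0$ both when $r\geq 2$ and when $r\geq 1$, $v\geq 4$. So the $v\geq 4$ branch of your argument would collapse, and repairing it means carrying out precisely the computation that is missing.
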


\begin{proof} Let us call $S_1,S_2,S_3,S_4,S_5$ the coherent components with $r,2,1,u,v$ vertices, respectively. According to Proposition \ref{ladossimilares}, there are only nine possibly different edge weights in this case: $a,b,c,d,e,f$ for the edges joining $S_1$ with $S_2$, $S_2$ with $S_3$, $S_3$ with $S_4$, $S_4$ with $S_5$, $S_2$ with $S_5$, and $S_2$ with $S_4$ respectively; and $g,h,i$ for the edges inside $S_2$, $S_4$ and $S_5$ respectively.

Suppose that in each case the graph is positive.

Let $r,u\geq 2$. Let us rewrite the system (\ref{equivalenciasistema}):
$$
\left[
  \begin{smallmatrix}
    0 & 0 & 0 & 2u & 4 & 0 & 0 & 0 & 2v-1 \\
    2r & 2 & 0 & 0 & 2v & 2u & 3 & 0 & 0 \\
    0 & 0 & 2 & 2v & 0 & 4 & 0 & 2u-1 & 0 \\
    3+r & 1 & 0 & 0 & v & u & 1 & 0 & 0 \\
    r & 4 & u & 0 & v & u & 1 & 0 & 0 \\
    0 & 2 & 2+u & v & 0 & 2 & 0 & u-1 & 0 \\
    r & 1 & 0 & u & 3+v & u & 1 & 0 & v-1 \\
    0 & 0 & 1 & u+v+1 & 2 & 2 & 0 & u-1 & v-1 \\
    r & 1 & 1 & v & v & 3+u & 1 & u-1 & 0 \\
  \end{smallmatrix}
\right]\left[
         \begin{smallmatrix}
           a \\
           b \\
           c \\
           d \\
           e \\
           f \\
           g \\
           h \\
           i \\
         \end{smallmatrix}
       \right]
       =\nu\left[
             \begin{smallmatrix}
               1 \\
               1 \\
               1 \\
               1 \\
               1 \\
               1 \\
               1 \\
               1 \\
               1 \\
             \end{smallmatrix}
           \right].
$$
Since the graph is positive, in particular, $g>0$. Straightforward calculations show that  $g=\frac{\nu(u+v+2)}{\det(A)}[u^2(2r-3)+uv(2r-3)+u(5r-9)+3v(2r-1)+(3r-6)]$, which is negative because $\det(A)<0$ for all $r,u,v$, and $u^2(2r-3)+uv(2r-3)+u(5r-9)+3v(2r-1)+(3r-6)>0$ since $r\geq 2$. In this case, the graph is not positive.

For the second case, $u=2$ and $v\geq 15$, we use the same matrix entry $g$ already calculated. If $u=2$, $g=\frac{\nu(v+4)}{\det(A)}[(10rv-9v)+(21r-36)]$. Then, $g\leq \frac{\nu(v+4)}{\det(A)}[(10v-9v)+(21-36)]\leq 0$ since $v\geq 15$. Therefore, the graph is not positive.

Finally, let $u=1$. In this case
$$
\left[
  \begin{smallmatrix}
    0 & 0 & 0 & 2 & 4 & 0 & 0 & 2v-1 \\
    0 & 2 & 3 & v & 0 & 2 & 0 & 0 \\
    2r & 2 & 0 & 0 & 2v & 2 & 3 & 0 \\
    3+r & 1 & 0 & 0 & v & 1 & 1 & 0 \\
    r & 4 & 1 & 0 & v & 1 & 1 & 0 \\
    0 & 0 & 1 & 2+v & 2 & 2 & 0 & v-1 \\
    r & 1 & 1 & v & v & 4 & 1 & 0 \\
    r & 1 & 0 & 1 & 3+v & 1 & 1 & v-1 \\
  \end{smallmatrix}
\right]\left[
         \begin{smallmatrix}
           a \\
           b \\
           c \\
           d \\
           e \\
           f \\
           g \\
           i \\
         \end{smallmatrix}
       \right]
       =\nu\left[
             \begin{smallmatrix}
               1 \\
               1 \\
               1 \\
               1 \\
               1 \\
               1 \\
               1 \\
               1 \\
             \end{smallmatrix}
           \right].
$$
Since the graph is positive, then $g$ is positive. Here $g=\frac{2\nu(3+v)}{\det(A)}[(5r-9)+v(4r-3)]=\frac{2\nu(3+v)}{\det(A)}[(4rv-3v)+(5r-9)]$, where $\det(A)$ is negative for all $r,v$. If $r\geq 2$, $g<0$ since $(5r-9)+v(4r-3)>0$; and if $v\geq 4$, $g\leq\frac{2\nu(3+v)}{\det(A)}[(4v-3v)+(5-9)]\leq 0$.

In all cases we arrive at a contradiction.
\end{proof}

\section{Proof of Theorem \ref{thm}}

Let $q\geq 21$ be fixed, and consider the following coherence graph,
\begin{center}
\begin{pspicture}(0,-2)(3,2)
\psline(0,0)(1,0)(2,1)(3,0)(1,0)(2,-1)(3,0)
\psdots[dotstyle=o,dotsize=5pt](0,0)
\psdots[dotstyle=*,dotsize=5pt](1,0)
\psdots[dotstyle=*,dotsize=5pt](2,1)
\psdots[dotstyle=*,dotsize=5pt](3,0)
\psdots[dotstyle=*,dotsize=5pt](2,-1)
\rput[d](-.5,0){$1$}
\rput[d](1,-.5){$2$}
\rput[c](2,1.5){$1$}
\rput[r](3.5,0){$2$}
\rput[d](2,-1.5){$q-6$}
\end{pspicture}
\end{center}
thus the corresponding graph must be of the form
\begin{center}
\begin{pspicture}(0,-.5)(5,5)
\psline(2,2)(2,3)(5,3)(5,2)(2,2)(5,3)(5,2)
\psline(2,3)(3.5,4)(5,3)
\psline(2,2)(0,2.5)(2,3)
\psline(5,2)(2,3)(5,3)
\psline(2,2)(3.5,4)(5,2)
\psline(2,2)(2.8,.5)(2,3)(4.2,.5)(2,2)
\psline(5,2)(4.2,.5)(5,3)(2.8,.5)(5,2)
\psdots[dotstyle=*,dotsize=5pt](2,2)
\psdots[dotstyle=*,dotsize=5pt](2,3)
\psdots[dotstyle=*,dotsize=5pt](5,2)
\psdots[dotstyle=*,dotsize=5pt](5,3)
\psdots[dotstyle=*,dotsize=5pt](0,2.5)
\psdots[dotstyle=*,dotsize=5pt](3.5,4)
\psdots[dotstyle=*,dotsize=5pt](2.8,.5)
\psdots[dotstyle=*,dotsize=5pt](4.2,.5)

\rput[r](-.2,2.5){$1$}
\rput[c](1.8,1.8){$3$}
\rput[c](1.8,3.2){$2$}
\rput[c](5.2,1.8){$6$}
\rput[c](5.2,3.2){$5$}
\rput[c](3.5,4.3){$4$}
\rput[c](2.6,.3){$7$}

\psframe[linewidth=.2mm,framearc=.9,
fillcolor=lightgray](1.5,1.5)(2.5,3.5)
\psframe[linewidth=.2mm,framearc=.9,
fillcolor=lightgray](4.5,1.5,)(5.5,3.5)
\psframe[linewidth=.2mm,framearc=.9,
fillcolor=lightgray](3,3.5)(4,4.5)
\psframe[linewidth=.2mm,framearc=.9,
fillcolor=lightgray](-.5,2)(.5,3)
\psframe[linewidth=.2mm,framearc=.9,
fillcolor=lightgray](2.1,0)(4.9,1)

\psarc(3.5,1.5){1.220655562}{235}{305}

\rput[c](4.4,.3){$q$}
\rput[c](3.5,.5){$\dots$}
\end{pspicture}
\end{center}
$\GRA$ is a graph as on the right in Figure \ref{Fig.} with $r=1$, $s=2$, $t=1$, $u=2$ and $v=q-6$. According to Lemma \ref{grafoscandidato}, part (ii), $\GRA$ is not positive because $u=2$ and $v=q-6\geq 15$. By Proposition \ref{criteriopositividad}, since $\GRA$ is not positive, the Lie algebra associated $\ngo_{\GRA}$ is not an Einstein nilradical.

Let us consider the following ordered set of certain edges of the graph $\GRA$:
\begin{equation*}
\begin{split}
H=&\{\{4,5\},\{4,6\};\\
&\{5,6\},\dots,\{5,q\};\{6,7\},\dots,\{6,q\};\dots;\{q-2,q-1\},\{q-2,q\};\{q-1,q\};\\
&\{2,5\},\dots,\{2,q\};\\
&\{2,3\},\{1,2\}\};
\end{split}
\end{equation*}
Let us now define a sequence of 2-step nilpotent Lie algebras by
$$
\left\{\begin{array}{ccl}
  \ngo_0 & := & \ngo_{\GRA}\,; \\
  \ngo_l & := & \ngo_{\GRA(l)}\,,\qquad 1\leq l \leq |H|;
\end{array}
\right.
$$
where $\GRA(l)$ is the graph which we obtain by deleting the first $l$ edges in $H$. If the edge at the place $l$ in $H$ is $\{i,j\}$, we define $\GRA(i,j):=\GRA(l)$ and $\ngo(i,j):=\ngo_l$.

We shall prove that $\ngo_l$ is not an Einstein nilradical for all $l=1,\dots,|H|$.

By deleting the edge $\{4,5\}$ in $\GRA$, we obtain $\GRA(1)$, which is a graph as on the right in Figure
\ref{Fig.} with $r=u=1,v=q-5$. According to Lemma \ref{grafoscandidato}, part (iii), the Lie algebra associated $\ngo_1$ to $\GRA(1)$ is not an Einstein nilradical because $u=1$ and $v=q-5\geq 16\geq 4$.

\begin{center}
\begin{pspicture}(0,-2)(4,2)
\rput(-.5,0){\ovalnode{a}{$1$}}
\rput(1,0){\ovalnode{b}{$2,3$}}
\rput(2,1){\ovalnode{c}{$4$}}
\rput(3,0){\ovalnode{d}{$6$}}
\rput(2,-1){\ovalnode{e}{$5;7,\dots,q$}}

\ncline{-}{a}{b}
\ncline{-}{b}{c}
\ncline{-}{c}{d}
\ncline{-}{b}{d}
\ncline{-}{b}{e}
\ncline{-}{d}{e}

\rput[c](7,0){$\GRA(1)$}
\end{pspicture}
\end{center}

Now by deleting the edge $\{4,6\}$ in $\GRA(1)$ we obtain $\GRA(2)$ which is a graph with 3 coherent components with $r=s=2,t=q-4$, as we have considered in \ref{o-*-*}. Then, $\GRA(2)$ is not positive because the positivity condition $(s+t)(s-r)>(r-1)(t-1)$ does not hold: $(s+t)0=(s+t)(s-r)\leq (r-1)(t-1)=q-5$. Consequently, by the Proposition \ref{criteriopositividad}, $\ngo_2$ is not an Einstein nilradical.
\begin{center}
\begin{pspicture}(0,-1)(4,1)
\rput(0,0){\ovalnode{a}{$1,4$}}
\rput(1.5,0){\ovalnode{b}{$2,3$}}
\rput(4.3,0){\ovalnode{e}{$5,6;7,\dots,q$}}
\ncline{-}{a}{b}
\ncline{-}{b}{e}

\rput[c](7,0){$\GRA(2)$}
\end{pspicture}
\end{center}

By now we have already obtained graphs $\GRA(5,6),\GRA(5,7),\dots,\GRA(5,q-1),\GRA(5,q)$.

If we delete the edge $\{5,6\}$ in $\GRA(2)$, we obtain $\GRA(5,6)$, which is a graph with 4 coherent components as in the middle in Figure \ref{Fig.} with $r=2$ ,$s=t=2$ and $u=q-6$. By Lemma \ref{4comp 2}, part (i), $\GRA(5,6)$ is not positive because $s=t=2$ and $r\geq 2$. Then, according to Proposition \ref{criteriopositividad}, $\ngo(5,6)$ follows that it is not an Einstein nilradical.
\begin{center}
\begin{pspicture}(0,-2)(4,2)
\rput(0,0){\ovalnode{a}{$1,4$}}
\rput(2,0){\ovalnode{b}{$2,3$}}
\rput(4,1){\ovalnode{c}{$5,6$}}
\rput(4,-1){\ovalnode{d}{$7,\dots,q$}}
\ncline{-}{a}{b}
\ncline{-}{b}{c}
\ncline{-}{c}{d}
\ncline{-}{b}{c}
\ncline{-}{b}{d}
\rput[c](7,0){$\GRA(5,6)$}
\end{pspicture}
\end{center}

If we successively delete the edges $(5,7),\dots,(5,j)$ in $\GRA(5,6)$, with $7\leq j\leq q-1$, we obtain  $\GRA(5,j)$, which is a graph with 5 coherent components as on the right in Figure \ref{Fig.} with $r=2$, $u=q-j$ and $v=j-5$. In order to prove that these graphs are not positive, we apply Lemma \ref{grafoscandidato}. We must distinguish two cases: $7\leq j\leq q-2$ and $j=q-1$.

If $j\leq q-2$, then, according to part (i) in Lemma \ref{grafoscandidato}, $\GRA(5,j)$ is not positive because $r\geq 2$ and $u=q-j\geq q-(q-2)=2$. If $j=q-1$, according to part (iii) in Lemma \ref{grafoscandidato}, $\GRA(5,q-1)$ is not positive because $u=q-(q-1)=1$ and $r\geq 2$.

Therefore, graphs $\GRA(5,7),\dots,\GRA(5,q-1)$ are not positive, and then the Lie algebras associated $\ngo(5,7),\dots,\ngo(5,q-1)$ are not Einstein nilradicals, according to Proposition \ref{criteriopositividad}.
\begin{center}
\begin{pspicture}(0,-2)(4,2)
\rput(-.5,0){\ovalnode{a}{$1,4$}}
\rput(1,0){\ovalnode{b}{$2,3$}}
\rput(2.5,1){\ovalnode{c}{$5$}}
\rput(4,0){\ovalnode{d}{$j+1,\dots,q$}}
\rput(2.5,-1){\ovalnode{e}{$6,\dots,j$}}

\ncline{-}{a}{b}
\ncline{-}{b}{c}
\ncline{-}{c}{d}
\ncline{-}{b}{d}
\ncline{-}{b}{e}
\ncline{-}{d}{e}

\rput[c](7,0){$\GRA(5,j)$}
\rput[c](7.5,-.5){$7\leq j\leq q-1$}
\end{pspicture}
\end{center}

By deleting the edge $\{5,q\}$ in $\GRA(5,q-1)$ we obtain $\GRA(5,q)$ which is a graph with 3 coherent components
with $r=3,s=2,t=q-5$ (see \ref{o-*-*}), and it follows that it is a non-positive graph because $(q-3)(-1)=(s+t)(s-r)\leq (r-1)(t-1)=2(q-6)$. Then, $\ngo(5,q)$ is not an Einstein nilradical (Proposition\ref{criteriopositividad}).
\begin{center}
\begin{pspicture}(0,-1)(4,1)
\rput(0,0){\ovalnode{a}{$1,4,5$}}
\rput(1.8,0){\ovalnode{b}{$2,3$}}
\rput(4.3,0){\ovalnode{e}{$6,7,\dots,q$}}
\ncline{-}{a}{b}
\ncline{-}{b}{e}

\rput[c](7,0){$\GRA(5,q)$}
\end{pspicture}
\end{center}

Graphs $\GRA(6,7),\GRA(6,8),\dots,\GRA(6,q-1),\GRA(6,q)$ are the following:
\begin{center}
\begin{pspicture}(0,-2)(4,2)
\rput(0,0){\ovalnode{a}{$1,4,5$}}
\rput(2,0){\ovalnode{b}{$2,3$}}
\rput(4,1){\ovalnode{c}{$6,7$}}
\rput(4,-1){\ovalnode{d}{$8,\dots,q$}}
\ncline{-}{a}{b}
\ncline{-}{b}{c}
\ncline{-}{c}{d}
\ncline{-}{b}{c}
\ncline{-}{b}{d}
\rput[c](7,0){$\GRA(6,7)$}
\end{pspicture}
\end{center}
\begin{center}
\begin{pspicture}(0,-2)(4,2)
\rput(-.5,0){\ovalnode{a}{$1,4,5$}}
\rput(1.3,0){\ovalnode{b}{$2,3$}}
\rput(2.5,1){\ovalnode{c}{$6$}}
\rput(4,0){\ovalnode{d}{$j+1,\dots,q$}}
\rput(2.5,-1){\ovalnode{e}{$7,\dots,j$}}

\ncline{-}{a}{b}
\ncline{-}{b}{c}
\ncline{-}{c}{d}
\ncline{-}{b}{d}
\ncline{-}{b}{e}
\ncline{-}{d}{e}

\rput[c](7,0){$\GRA(6,j)$}
\rput[c](7.5,-.5){$8\leq j\leq q-1$}
\end{pspicture}
\end{center}
\begin{center}
\begin{pspicture}(0,-1)(4,1)
\rput(0,0){\ovalnode{a}{$1,4,5,6$}}
\rput(2.1,0){\ovalnode{b}{$2,3$}}
\rput(4.3,0){\ovalnode{e}{$7,\dots,q$}}
\ncline{-}{a}{b}
\ncline{-}{b}{e}

\rput[c](7,0){$\GRA(6,q)$}
\end{pspicture}
\end{center}

Note that the structure of graphs $\GRA(5,6),\dots,\GRA(5,q)$ is the same as in $\GRA(6,7),\dots,\GRA(6,q)$; and $\GRA(7,8),\dots,\GRA(7,q)$; and so on until $\GRA(q-3,q-2),\GRA(q-3,q-1),\GRA(q-3,q)$.

In general, in graphs $\GRA(i,i+1),\GRA(i,i+2),\dots,\GRA(i,q-1),\GRA(i,q)$, with $5\leq i\leq q-3$, we must distinguish three cases: the first graph, the last graph, and the ones in the middle:
\begin{itemize}
  \item $\GRA(i,i+1)$ is a graph with 4 coherent components as in the middle in Figure \ref{Fig.} with $r=i-2$, $s=2$, $t=2$ and $u=q-i-1$; and thus it is not positive according to Lemma \ref{4comp 2}, part (i) ($s=t=2$ and $r=i-2\geq 3\geq 2$). Then, $\ngo(i,i+1)$ is not an Einstein nilradical by the Proposition \ref{criteriopositividad}.
      \begin{center}
\begin{pspicture}(0,-2)(4,2)
\rput(-1,0){\ovalnode{a}{$1;4,\dots,i-1$}}
\rput(2,0){\ovalnode{b}{$2,3$}}
\rput(4,1){\ovalnode{c}{$i,i+1$}}
\rput(4,-1){\ovalnode{d}{$i+2,\dots,q$}}
\ncline{-}{a}{b}
\ncline{-}{b}{c}
\ncline{-}{c}{d}
\ncline{-}{b}{c}
\ncline{-}{b}{d}
\rput[c](7,0){$\GRA(i,j)$}
\rput[c](7,-.5){$5\leq i\leq q-3$}
\rput[c](7,-1){$j=i+1$}

\end{pspicture}
\end{center}
  \item $\GRA(i,j)$ with $i+2\leq j\leq q-1$ are graphs with 5 coherent components as on the right in Figure \ref{Fig.} with $r=i-3$, $u=q-j$ and $v=j-i$, and by Lemma \ref{grafoscandidato} all these graphs are not positive; for, if $j\leq q-2$, we apply the part (i) in the Lemma \ref{grafoscandidato} since $r=i-3\geq 2$ and $u=q-j\geq q-(q-2)=2$; and if $j=q-1$, we apply part (iii) in Lemma \ref{grafoscandidato} because $u=1$ and $r=i-3\geq 2$. Then, according to Proposition \ref{criteriopositividad}, $\ngo(i,j)$ is not an Einstein nilradical, for all $i=7,\dots,q-1$.
      \begin{center}
\begin{pspicture}(0,-2)(4,2)
\rput(-2,0){\ovalnode{a}{$1;4,\dots,i-1$}}
\rput(1,0){\ovalnode{b}{$2,3$}}
\rput(2.5,1){\ovalnode{c}{$i$}}
\rput(4,0){\ovalnode{d}{$j+1,\dots,q$}}
\rput(2.5,-1){\ovalnode{e}{$i+1,\dots,j$}}

\ncline{-}{a}{b}
\ncline{-}{b}{c}
\ncline{-}{c}{d}
\ncline{-}{b}{d}
\ncline{-}{b}{e}
\ncline{-}{d}{e}

\rput[c](7,0){$\GRA(i,j)$}
\rput[c](7.2,-.5){$5\leq i\leq q-3$}
\rput[c](7.5,-1){$i+2\leq j\leq q-1$}
\end{pspicture}
\end{center}
  \item $\GRA(i,q)$ is a graph as in \ref{o-*-*} with $r=i-2$, $s=2$ and $t=q-i$, which is not positive because the positivity condition $(s+t)(s-r)>(r-1)(t-1)$ does not hold: $(s+t)(s-r)=(q-i+2)(4-i)\leq 0\leq (i-3)(q-i-1)=(r-1)(t-1)$.
  \begin{center}
\begin{pspicture}(0,-1)(4,1)
\rput(-.5,0){\ovalnode{a}{$1;4,\dots,i$}}
\rput(1.8,0){\ovalnode{b}{$2,3$}}
\rput(4.3,0){\ovalnode{e}{$i+1\dots,q$}}
\ncline{-}{a}{b}
\ncline{-}{b}{e}

\rput[c](7,0){$\GRA(i,q)$}
\end{pspicture}
\end{center}
\end{itemize}

Graphs $\GRA(q-2,q-1),\GRA(q-2,q)$ and $\GRA(q-1,q)$ are:
      \begin{center}
\begin{pspicture}(0,-2)(4,2)
\rput(-1,0){\ovalnode{a}{$1;4,\dots,q-3$}}
\rput(2,0){\ovalnode{b}{$2,3$}}
\rput(4,1){\ovalnode{c}{$q-2,q-1$}}
\rput(4,-1){\ovalnode{d}{$q$}}
\ncline{-}{a}{b}
\ncline{-}{b}{c}
\ncline{-}{c}{d}
\ncline{-}{b}{c}
\ncline{-}{b}{d}
\rput[c](7,0){$\GRA(q-2,q-1)$}
\end{pspicture}
\end{center}
 \begin{center}
\begin{pspicture}(0,-1)(4,1)
\rput(-.5,0){\ovalnode{a}{$1;4,\dots,q-2$}}
\rput(2.4,0){\ovalnode{b}{$2,3$}}
\rput(4.3,0){\ovalnode{e}{$q-1,q$}}
\ncline{-}{a}{b}
\ncline{-}{b}{e}

\rput[c](7,0){$\GRA(q-2,q)$}
\end{pspicture}
\end{center}
  \begin{center}
\begin{pspicture}(0,-1)(4,1)
\rput(-.5,0){\ovalnode{a}{$1;4,\dots,q$}}
\rput(2.4,0){\ovalnode{b}{$2,3$}}
\ncline{-}{a}{b}

\rput[c](7,0){$\GRA(q-1,q)$}
\end{pspicture}
\end{center}

$\GRA(q-2,q-1)$ is a graph with 4 coherent components as in the middle in Figure \ref{Fig.} with $r=q-5$, $s=2$, $t=2$ and $u=q-5$, and thus it is not positive by part (i) in the Lemma \ref{4comp 2} because $s=t=2$ and $r=q-5\geq 16\geq 2$; and $\GRA(q-2,q)$ is a graph as in \ref{o-*-*} with $r=q-4$, $s=2$ and $t=2$ which is not positive because $(s+t)(s-r)=4(2-q)\leq 0\leq q-5=(r-1)(t-1)$. Then, the Lie algebras associated, $\ngo(q-2,q-1)$ and $\ngo(q-2,q)$ are not Einstein nilradicals (Proposition \ref{criteriopositividad}). Similarly, the Lie algebra $\ngo(q-1,q)$ associated with graph $\GRA(q-1,q)$ is not an Einstein nilradical because $\GRA(q-1,q)$ is not positive (see \ref{o-*}). $\star$

Now from $\GRA(q-1,q)$ let us obtain graphs $\GRA(2,5),\dots,\GRA(2,q)$. By successively deleting the edges $\{2,5\},\dots,\{2,j\}$, we have $\GRA(2,j)$, $5\leq j\leq q$. Each $\GRA(2,j)$ is a graph with 4 coherent components as in middle in Figure \ref{Fig.} with $r=j-4$, $s=1$, $t=q-j+2$ and $u=1$. (Convention: in the figure below $\{1,4,i+1,\dots,q\}=\{1,4\}$ if $j=q$.) Then, by Lemma \ref{4comp 2}, part (ii), $\GRA(2,j)$ is non-positive for all $j=5,\dots,q$  because $s=u=1$ and $t=q-j+2\geq q-q+2=2$. Then, according to Proposition \ref{criteriopositividad}, $\ngo(2,j)$ is not an Einstein nilradical for all $j=5,\dots,q$.
      \begin{center}
\begin{pspicture}(0,-2)(4,2)
\rput(0,0){\ovalnode{a}{$5,\dots,j$}}
\rput(2,0){\ovalnode{b}{$3$}}
\rput(4,1){\ovalnode{c}{$1,4;j+1,\dots,q$}}
\rput(4,-1){\ovalnode{d}{$2$}}
\ncline{-}{a}{b}
\ncline{-}{b}{c}
\ncline{-}{c}{d}
\ncline{-}{b}{c}
\ncline{-}{b}{d}
\rput[c](7,0){$\GRA(2,j)$}
\rput[c](7,-.5){$5\leq j\leq q$}
\end{pspicture}
\end{center}

If we delete the edge $\{2,3\}$ in $\GRA(2,q)$ we have:
      \begin{center}
\begin{pspicture}(0,-2)(4,2)
\rput(-.5,0){\ovalnode{a}{$5,\dots,q$}}
\rput(1.3,0){\ovalnode{b}{$3$}}
\rput(2.5,0){\ovalnode{c}{$1,4$}}
\rput(4,0){\ovalnode{d}{$2$}}
\ncline{-}{a}{b}
\ncline{-}{b}{c}
\ncline{-}{c}{d}
\rput[c](7,0){$\GRA(2,3)$}
\end{pspicture}
\end{center}
By deleting the edge $\{1,2\}$ in $\GRA(2,3)$ we have:
      \begin{center}
\begin{pspicture}(0,-2)(4,2)
\rput(-1,0){\ovalnode{a}{$1;5,\dots,q$}}
\rput(1.3,0){\ovalnode{b}{$3$}}
\rput(2.5,0){\ovalnode{c}{$4$}}
\rput(3.8,0){\ovalnode{d}{$2$}}
\ncline{-}{a}{b}
\ncline{-}{b}{c}
\ncline{-}{c}{d}
\rput[c](7,0){$\GRA(1,2)$}
\end{pspicture}
\end{center}
Graphs $\GRA(2,3)$ and $\GRA(1,2)$ are as on the left in Figure \ref{Fig.} with $s=1,2$ and $u=q-4$. Then by Lemma \ref{4comp 1}, $\GRA(2,3)$ and $\GRA(1,2)$ are not positive graphs because $u=q-4\geq 6$. Therefore, the Lie algebras associated, $\ngo(2,3)$ and $\ngo(1,2)$, are not Einstein nilradicals (Proposition \ref{criteriopositividad}).

Remember that if a graph is connected, then  the Lie algebra associated is {\it indecomposable}. It can be proved that $\GRA(1,2)=\GRA(|H|)$ is connected. Then all the graphs $\GRA(l)$, $l=1,\dots,|H|$, are also connected (because we obtain them by adding edges in $\GRA(1,2)$). Therefore, the Lie algebra $\ngo_l$ is indecomposable for all $l=0,1,\dots,|H|$.

In addition, remember that if a connected graph $\GRA$ has $p$ edges and $q$ vertices, its 2-step nilpotent Lie algebra associated is of type $(p,q)$. The starting point was graph $\GRA$, which has $p=D_q-2q+9$ edges and $q$ vertices, and so $\ngo_0=\ngo_{\GRA}$ is a Lie algebra of type $(D_q-2q+9,\,q)$. By successively deleting all the elements of $H$ we obtain the last graph $\GRA(1,2)$, which is a graph with $q$ vertices and $p=q-1$ edges. Then $\ngo(1,2)=\ngo_{\GRA(1,2)}$ is a Lie algebra of type $(q-1,q)$. Obviously each intermediate graph between $\GRA$ and $\GRA(1,2)$ has $q$ (fixed) vertices and $p$ edges, where $q-1\leq p \leq D_q-2q+9$. Therefore we have proved that for all $(p,q)$ satisfying $21\leq q$ and $q-1\leq p\leq D_q-2q+9$, choosing $l=D_q-2q+9-p$, $\ngo_l$ is a 2-step nilpotent Lie algebra of type $(p,q)$, that is indecomposable and in addition it is not an Einstein nilradical. This concludes the proof of Theorem \ref{thm}.

\end{document}